\def\QED{\hfill {$\square$}\goodbreak \medskip}
\newcommand{\Om} {\Omega}
\newcommand{\be} {\begin{equation}}
\newcommand{\ee} {\end{equation}}
\newcommand{\bea} {\begin{eqnarray}}
\newcommand{\eea} {\end{eqnarray}}
\newcommand{\Bea} {\begin{eqnarray*}}
\newcommand{\Eea} {\end{eqnarray*}}
\newcommand{\al} {\alpha}
\newcommand{\De} {\Delta}
\newcommand{\la} {\lambda}
\newcommand{\e} {\epsilon}
\newcommand{\2}{2^*_{\mu,s}}
\newcommand{\f}{\|u\|^2_{X_0}}
\def\R{{\mathbb R}}
\def\dx{\,{\rm d}x}
\def\dy{\,{\rm d}y}
\def\R{{\mathbb R}}
\newcommand{\ra} {\rightarrow}
\numberwithin{equation}{section}
\newtheorem{definition}{Definition}[section]
\newtheorem{theorem}{Theorem}[section]
\newtheorem{rem}{Remark}[section]
\newtheorem{lemma}{Lemma}[section]
\newtheorem{prop}{Proposition}[section]
\numberwithin{equation}{section}
\begin{document}
\setlength{\abovedisplayskip}{3pt}
\setlength{\belowdisplayskip}{3pt}
\date{}
	\title{Normalized solutions for fractional Choquard equation  with critical growth on  bounded domain}
    \author{ {\bf Divya Goel$\,^{1,}$\footnote{e-mail: {\tt divya.mat@iitbhu.ac.in}},  Asmita Rai$\,^{1,}$\footnote{e-mail: {\tt asmita.rai65@gmail.com}}} \\ $^1\,$Department of Mathematical Sciences, Indian Institute of Technology (BHU),\\ Varanasi 221005, India.}
		
	\maketitle
\begin{abstract}
In this work, we establish the multiplicity of positive solutions for the  following critical fractional Choquard equation with a perturbation on the star-shaped bounded domain 
\begin{equation*}    
 \begin{array}{cc}
(-\Delta)^s u = \lambda u +\alpha|u|^{p-2}u+ \left( \int\limits_{\Omega} \frac{|u(y)|^{2^{*}_{\mu ,s}}}{|x-y|^ \mu}\, \dy\right)  |u|^{2^{*}_{\mu ,s}-2}u\; \text{in} \; \Omega,\\
u>0\; \text{in}\; \Omega,\; \\ u = 0\; \text{in} \; \mathbb{R}^{N}\backslash\Omega, \\ \int_{\Omega}|u|^2\dx=d,
\end{array}
\end{equation*} 
where, $s\in(0,1), N>2s$, $\alpha\in \mathbb{R}$, $d>0$, $2<p<2^*_s:=\frac{2N}{N-2s}$ and $2^{*}_{\mu ,s}:=\frac{2N-\mu}{N-2s}$ represents fractional Hardy-Littlewood-Sobolev critical exponent.  Using the minimization technique over an appropriate set and the uniform mountain pass theorem, we prove the existence of first and second solutions, respectively.\\ 
\medskip
    
	\noindent \textbf{Key words:} Normalized solutions,  Fractional Laplacian, Choquard equation, Star-shaped bounded domain. 

\medskip

\noindent \textit{2020 Mathematics Subject Classification:} 35A15, 35J20, 35J60 
\end{abstract}
\maketitle
\section{Introduction}
This article is concerned with the existence of solutions for the fractional Choquard problem
\begin{align}    
\displaystyle  (-\Delta)^s u = \lambda u &+\alpha|u|^{p-2}u+ \left( \int\limits_{\Omega} \frac{|u(y)|^{2^{*}_{\mu ,s}}}{|x-y|^ \mu}\, \dy\right)  |u|^{2^{*}_{\mu ,s}-2}u, ~ \text{in} \; \Omega,~u = 0\; \text{in} \; \mathbb{R}^{N}\backslash\Omega, \label{b101}
\end{align} 
having prescribed mass $\|u\|_2^2=d$. Here $\Omega$ is a smooth bounded star-shaped domain of $\R^N$ with $N>2s$, $s\in(0,1),~\mu\in(0,N)$ and $d>0$. The parameter  $\alpha$ and $p$ satisfy one of the following conditions: 
\begin{itemize}
	\item[$(\mathcal{A}_1)$] $\alpha=0$,
    \item[$(\mathcal{A}_2)$] $\alpha>0,2+\frac{4s}{N}<p<\frac{2N}{N-2s}$,
	\item[$(\mathcal{A}_3)$] $\alpha<0, 2<p<\frac{2N}{N-2s}$. 
\end{itemize} 
 The operator  $(-\Delta)^s$ represents the fractional Laplacian, which is defined as 
\begin{equation*}
(-\Delta)^su(x)=-P.V.\int_{\R^N}\frac{u(x)-u(y)}{|x-y|^{N+2s}}\dy
\end{equation*}
  (up to a normalizing constant), using the Cauchy principal value (P.V.). This operator plays a significant role in the analysis of several physical phenomena, including crystal dislocation, minimal surfaces, phase transition, and Bose-Einstein condensation. Further details on these applications can be found in \cite{di2012hitchhikers, frohlich2007boson, longhi2015fractional}. 
  Laskin extended the classical local Laplacian by introducing fractional powers, thereby formulating a generalized approach to nonlinear Schrodinger equations. For more details, one can refer to \cite{applebaum2004levy,laskin2000fractional} and references therein. The problems involving the fractional Laplacian are of significant interest due to their rich mathematical structure. Over the last decade, mathematicians studied existence, multiplicity, bifurcation, and regularity results for these problems. For more details, one can refer \cite{bisci2016variational}.

The study of Choquard-type equations is extremely significant in many physical models. In 1954, Pekar \cite{pekar1954untersuchungen} discussed the following equation that arises in the quantum theory of polaron
\begin{equation*}
-\Delta u+u=\big(|x|^{-1} *|u|^2\big)u~~~\text{in}~\R^N.
\end{equation*}
Later, in 1976, by Choquard \cite{lieb1977existence}, the equation arose in modeling an electron trapped in its hole, under a certain approximation to the Hartree-Fock theory of one-component plasma.  Penrose\cite{penrose1996gravity} in his discussion on the self-gravitational collapse of quantum mechanical wave functional, and referred to it as the Schrodinger-Newton system.  Over the last decade, Choquard equations have received so much attention after the works of Moroz and Schaftingen \cite{moroz2013groundstates,moroz2015existence}  in the whole domain $\R^N$, and on bounded domains, Yang and Gao \cite{gao2018brezis}. 
For more works on the Choquard equation, we refer to \cite{liu2025sign,biswas2021variable,chen2024existence,alves2016existence}  and references therein with no intention to provide the full list.

From the mathematical approach, problem \eqref{b101} is doubly nonlocal due to the presence of terms $(-\Delta)^s$ and $\left( \displaystyle\int\limits_{\Omega} \frac{|u(y)|^{2^{*}_{\mu,s}}}{|x-y|^ \mu}\, \dy\right)$.  This causes some mathematical hurdles, which make the study of \eqref{b101} particularly interesting.
To solve \eqref{b101}, there are two different approaches.
The first way to solve \eqref{b101} is to consider $\la$ is fixed.  At present, we refer to this as a fixed frequency problem.  The existence, multiplicity and concentration of
solutions have already been studied for this type of problem. We refer the readers to \cite{d2015fractional,ma2017existence,yang2020multiplicity,mukherjee2016fractional}  and the references therein. The other point of view is to have a prescribed $L^2-$norm, i.e., $\|u\|_2^2=d$, for some $d>0$. When we fix the $L^2-$norm, $\lambda$ acts as a Lagrange multiplier. A solution with a prescribed $L^2-$norm is called a normalized solution. This type of solution is crucial in quantum physics, as wave functions ensure that the total probability of finding a particle equals one when $\|u\|_2=1$.\\
In these type of problems, we used the Gagliardo-Nirenberg inequality \cite{frank2016uniqueness}  to study the problem variationally. A new critical exponent $ p = 2 + 4s/N, ~ s \in (0,1)$ ($L^2$-critical exponent or mass critical exponent) appears for  \eqref{b101}. In general, we say that the region where $ r \in ( 2,  2 + 4s/N)$  is $L^2-$subcritical regime,  whereas $ r \in ( 2 +  4s/N, \infty)$   is $L^2-$supercritical regime. 

The study of normalized solutions starts with the work of Jeanjean \cite{jeanjean1997existence}. He studied the following problem ($s=1$)
\begin{equation}\label{b91}
\begin{cases}
&(-\Delta)^s u=\lambda u+g(u)~~\text{ in }\R^N,\\
&\|u\|_2^2=d, u \in H^s(\R^N).
\end{cases}
\end{equation}
He proved the existence of a normalized solution under the $L^2$-supercritical and Sobolev subcritical growth of the function $g$. Recently, Soave \cite{soave2020normalized1,soave2020normalized} proved the existence of ground state solution for the problem \eqref{b91} with $g(u)= |u|^{q-2} u+ |u|^{p-2}u$, $q\in(2,2^*)$ and $p\in (2+\frac{4}{N},2^*]$. Soave used the ideas of Jeanjean and the interplay between the $L^2$-subcritical and $L^2$-supercritical growth in the functional to establish the existence of a ground state solution.
After that, Luo and Zang \cite{luo2020normalized}  generalized the result of \cite{soave2020normalized} for $s \in (0,1)$. He, Radulescu, and Zou \cite{he2022normalized} extended this work on the problem of the fractional Laplacian when it has combined nonlinearity with the Choquard term. For more information, one can refer \cite{li2023nonexistence,zhang2022normalized,liu2019multiple,liu2024normalized,meng2024normalized,yu2023normalized} and references therein.

While normalized solutions have been extensively studied in the whole space $\R^N$,  there are very few works on a bounded domain.  Noris, Tavares, and Verzini\cite{noris2015existence} were the first to discuss the existence and orbital stability of ground states normalized solutions for $L^2-$critical and supercritical cases on a unit ball. Later, Verzini and Pierotti  \cite{pierotti2017normalized} established the existence of a solution using Morse theory for any bounded domain.  Recently, \cite{qi2024normalized},  Qi and Zou proved the existence of normalized solution in bounded domain for the following equation 
\begin{equation}\label{b2}
\begin{cases}
-\Delta u +&V(x)u =\lambda u+|u|^{p-2}u+|u|^{q-2}u ~~\text{ in }\Om\\
&\|u\|_2^2=d. 
\end{cases}
\end{equation}
Here, $2<p<2+\frac{4}{N}<q<2^*$. After that Lin, and  Lv \cite{lin2025normalized} studied \eqref{b2} for the supercritical nonlinearity. For more details, we refer  \cite{wang2024normalized,pierotti2025normalized,zhang2024normalized}. 

To the best of our knowledge,  there is no work which discusses normalized solutions to the Choquard equation over a bounded domain. Therefore, in this paper, we establish the existence and multiplicity of solutions of the following problem 
\begin{equation}  
\begin{cases}
  (-\Delta)^s u = \lambda u +\alpha|u|^{p-2}u+ \left( \int\limits_{\Omega} \frac{|u(y)|^{2^{*}_{\mu ,s}}}{|x-y|^ \mu}\, \dy\right)  |u|^{2^{*}_{\mu ,s}-2}u, ~ u>0 \; \text{in} \; \Omega,\\
 u = 0\; \text{in} \; \mathbb{R}^{N}\backslash\Omega, \\ \int_{\Omega}|u|^2\dx=d. 
\end{cases}
\tag{\(F_d\)}
\label{F}
\end{equation} 
Here, we prove the existence and multiplicity of the normalized solution. To establish the result, we used the variational methods. Since the domain is not scale invariant, we are not able to extend the techniques developed for the $\R^N$ case. To this, we wisely used the facts that $\Om$ is a star-shaped domain, every critical point of the associated function to \eqref{F} satisfies the Pohozaev identity and define a set $\mathcal{T}$ (see \eqref{T}). Subsequently,   we prove that the  associated energy functional is bounded below  on the $\mathcal{T}$.
Now, applying the minimization technique over $\mathcal{T}$, we prove the existence of a positive solution.  We even established that the energy of the solution is positive.  For the second solution,  we used the monotonicity technique of Jeanjean \cite{jeanjean1999existence}  to get the boundedness of the Palais-Smale sequence, and then we used the uniform mountain pass theorem. 

With this, we will define the weak solution corresponding to \eqref{F} and state our main results. To study the problem, we define the following space 
$$X_0:=\{u\in H^s(\R^N) : u=0~\text{a.e. in }\R^N\backslash\Omega\},$$
which forms a closed subspace of the fractional Sobolev space $H^s(\R^N)$ with the corresponding norm,
\begin{equation*}
 \|u\|_{X_0}=\left(\int_{\R^N}\int_{\R^N}\frac{|u(x)-u(y)|^2}{|x-y|^{N+2s}}\dx\dy\right)^{\frac{1}{2}}.  \end{equation*}
 \begin{definition} We say $u\in X_0$, is a weak solution of \eqref{F}, if it satisfy
    $$ \langle u,\phi\rangle_{X_0}=\lambda\langle u,\phi \rangle+\int_{\Omega}\int_{\Omega}\frac{|u(x)|^{\2}|u(y)|^{\2-1}\phi(y)}{|x-y|^{\mu}}\dx\dy+\alpha\int_{\Omega}|u(x)|^{p-1}\phi(x)\dx,$$
    for all $\phi\in X_0.$
\end{definition}

The  energy functional associated with \eqref{F} is defined as 
\begin{equation*}
{J}_d(u)=\frac{1}{2}\|u\|^2_{X_0}-\frac{\alpha}{p}\|u\|_p^p-\frac{1}{2.2^*_{\mu,s}}\|u\|_{NL}^{2\2},
\end{equation*}
under the constraint set $S_d:=\{u\in X_0:\|u\|_2^2=d\}.$\par
To find a positive solution of \eqref{F}, we examine the critical points of the following energy
\begin{equation*}
\tilde{J}_d(u)=\frac{1}{2}\|u\|^2_{X_0}-\frac{\alpha}{p}\|u^+\|_p^p-\frac{1}{2.2^*_{\mu,s}}\|u^+\|_{NL}^{2\2},
\end{equation*}
where $u^+=\text{max}\{u,0\}$ and 
$$ u \in S_d^+:=\{u\in X_0:\|u^+\|_2^2=d\}.$$
\begin{theorem}\label{B11}
	Let $\Om$ be a bounded domain with smooth boundary and assume that $\Om$ is star-shaped with respect to the origin. Suppose that the parameters $\alpha,p,d$ satisfy one of the following conditions:
	\begin{itemize}
		\item[(i)] $\alpha=0$,
        \begin{equation}\label{M1}
		d<\sup_{u\in S_1^+}\left(\min\left\{\left(\frac{\f}{\|u^+\|_{NL}^{2\2}  }\right)^{\frac{1}{\2-1}},\frac{2(\2-1)S_{HL}}{\2\|u\|_{X_0}^2} \right\}\right) ,
        \end{equation}
		\item[(ii)] $\alpha>0,2+\frac{4s}{N}<p<2^*_s$ and 
       
        \begin{equation}\label{M2}
\begin{aligned}
    d &< \sup_{u \in S_1^+} \Bigg(
    \min \Bigg(
    \max_{\xi \in (0,1)} \Bigg\{ 
        \Bigg( \xi \frac{\f}{\|u^+\|_{NL}^2} \Bigg)^{\frac{1}{\2 - 1}}, 
        \Bigg( (1 - \xi) \frac{\f}{\alpha \delta_p \|u^+\|_p^p} \Bigg)^{\frac{2}{p - 2}}
    \Bigg\}, \\
    &\quad \max_{\tau \in (0,1)} \Bigg\{ 
        \min \Bigg\{ 
        \Bigg( \frac{1}{2} - \frac{1}{p \delta_p} \Bigg) 
        \Bigg( \tau S_{HL}^{\2} \Bigg)^{\frac{2}{\2 - 2}} 
        \frac{2}{\|u\|_{X_0}^2}, \\
        &\qquad 
        \Bigg( \frac{1 - \tau}{\alpha \delta_p C_p^p} \Bigg)^{\frac{2}{p - 2}} 
        \Bigg( \Bigg( \frac{1}{2} - \frac{1}{p \delta_p} \Bigg) 
        \frac{2}{\|u\|_{X_0}^2} \Bigg)^{\frac{p \delta_p - 2}{p - 2}}
        \Bigg\}
    \Bigg\}
    \Bigg)
    \Bigg).
\end{aligned}
\end{equation}

        \item[(iii)] $\alpha<0,2<p<2^*_s$ and 
              \begin{align}\label{M3}
   \notag d &< \sup_{u \in S_1^+} \left( \min_{\xi \in (0,1)} \max \left\{ 
    \min \left\{ 
    \xi \frac{\2 - 1}{2\2} (S_{HL})^{\frac{\2}{\2 - 1}} \frac{2}{\f}, 
    \right. \right. \right. \\
    & \qquad \left. \left. \left. 
    \left( (1-\xi) \frac{\2 - 1}{2\2} (S_{HL})^{\frac{\2}{\2- 1}} \frac{p}{|\alpha| \|u^+\|_p^p} \right)^{\frac{2}{p}} 
    \right\} 
    \right\} \right).
\end{align}

	\end{itemize}
    Then the value 
    \begin{equation*}
        m_d:=\inf_{\mathcal{T}}\tilde{J}_d\in
        \begin{cases}
            \left(0,\frac{\2-1}{2\2}S_{HL}^{\frac{\2}{\2-1}}\right)~~~~\text{for}~\mathcal{A}_1,\mathcal{A}_3,\\
           \left(0, \left(\frac{1}{2}-\frac{1}{p\delta_p} \right)\max\limits_{\tau\in(0,1)}\min\left\{ \left(\tau S_{HL}^{\2} \right)^{\frac{2}{\2-2}},\left(\frac{1-\tau}{\alpha\delta_pC_p^pd^{p(1-\delta_p)/2}}\right)^{\frac{2}{p\delta_p-2}}\right \}\right)~~~\text{for}~\mathcal{A}_2,
        \end{cases}
    \end{equation*}
    and $m_d$ is achieved in $\mathcal{T}$, for some $u_d\in S_d^+$. Moreover, $u_d$ is a positive solution of \eqref{F} with Lagrange multiplier $\lambda_d$ satisfying $\lambda_d>0$ for $\mathcal{A}_1$, $\mathcal{A}_3$ and for $\mathcal{A}_2$, $\lambda_d<\lambda_{1,s}$, where $\lambda_{1,s}$ represents the first eigenvalue of $(-\De)^s$ on $\Om$ with Dirichlet boundary condition.
\end{theorem}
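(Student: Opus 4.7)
The plan is to implement a two-level constrained minimization: first on the Pohozaev-type set $\mathcal{T}$ (which incorporates the Pohozaev identity that every critical point of $\tilde J_d|_{S_d^+}$ must satisfy when $\Omega$ is star-shaped), and then, within $\mathcal{T}$, on the mass sphere $S_d^+$. The three regimes $(\mathcal{A}_1),(\mathcal{A}_2),(\mathcal{A}_3)$ are treated in parallel with the same outline, only the a priori estimates and fiber-map analysis changing. The starting point is to verify that $\mathcal{T}\cap S_d^+$ is nonempty and captures every solution: for any $u\in S_d^+$ I would study the scaling/dilation curve (or the Pohozaev functional $P(u)$) and use the Hardy--Littlewood--Sobolev and Gagliardo--Nirenberg inequalities to locate its zeros, showing that the bounds \eqref{M1}, \eqref{M2}, \eqref{M3} are exactly what is needed to guarantee the right geometric picture (two critical points in case $(\mathcal{A}_2)$, a unique maximum in cases $(\mathcal{A}_1),(\mathcal{A}_3)$).

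Next I would prove that $\tilde J_d$ is bounded below on $\mathcal{T}\cap S_d^+$ and that the infimum $m_d$ lies strictly inside the interval displayed in the theorem. The upper bound on $m_d$ in terms of $\tfrac{\2-1}{2\cdot \2}S_{HL}^{\2/(\2-1)}$ (and its analogue for $(\mathcal{A}_2)$) is obtained by evaluating $\tilde J_d$ along the fiber of a judiciously chosen test function in $S_1^+$ and then using that the $\sup_{u\in S_1^+}$ in \eqref{M1}--\eqref{M3} is attained below the threshold. The positivity of $m_d$ follows from combining the Pohozaev relation defining $\mathcal{T}$ with HLS and Sobolev, again using the explicit $d$-bounds to absorb the subcritical and critical terms.

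The main obstacle is the compactness of a minimizing sequence $(u_n)\subset \mathcal{T}\cap S_d^+$ for $m_d$, because of the \emph{doubly} critical nonlocal nonlinearity. After obtaining boundedness of $(u_n)$ in $X_0$ from the Pohozaev constraint (where again the $d$-bounds guarantee coercivity), I would extract a weak limit $u_d$, pass to the Lagrange multipliers $\lambda_n,\mu_n$ (one for mass, one for $\mathcal{T}$) and show $\mu_n\to 0$ by proving $\mathcal{T}$ is a natural constraint, using that the fiber map is non-degenerate at its maximum. Strong convergence in $L^p$ is free for $p<2^*_s$ by Rellich; for the critical Choquard term I would invoke a Brézis--Lieb type decomposition for the HLS nonlinearity (as in Gao--Yang) together with a concentration-compactness argument to exclude energy loss. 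The upper bound on $m_d$ rules out bubbling because any concentrating bubble would contribute at least $\tfrac{\2-1}{2\cdot \2}S_{HL}^{\2/(\2-1)}$ of energy. This yields $u_n\to u_d$ strongly, hence $\tilde J_d(u_d)=m_d$.

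Finally I would read off the properties of $u_d$. Positivity: since the functional uses $u^+$, testing the Euler--Lagrange equation with $u_d^-$ gives $u_d\ge 0$, and the fractional strong maximum principle upgrades to $u_d>0$ in $\Omega$, so $u_d\in S_d$. For the sign of $\lambda_d$, I would test the equation with $u_d$ itself and combine with the Pohozaev identity: in cases $(\mathcal{A}_1),(\mathcal{A}_3)$ the combination immediately yields $\lambda_d>0$ (the subcritical term helps or is absent); in case $(\mathcal{A}_2)$, the $L^2$-supercritical term forces the opposite sign, and the inequality $\lambda_d<\lambda_{1,s}$ follows by testing against the first Dirichlet eigenfunction $\varphi_{1,s}$ of $(-\Delta)^s$ and using positivity of the nonlinear terms together with the variational characterization of $\lambda_{1,s}$.
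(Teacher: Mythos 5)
Your overall architecture coincides with the paper's (minimize $\tilde{J}_d$ over $\mathcal{T}$, use the Pohozaev identity plus star-shapedness to show every constrained critical point lies in $\mathcal{T}$, get positivity from the $u^+$-truncation and the strong maximum principle, and read off the multiplier by testing with $u_d$ resp.\ $\varphi_{1,s}$), but there is one concrete misstep in the compactness step. The set $\mathcal{T}$ in \eqref{T} is defined by a \emph{strict} inequality, so it is an open subset of $S_d^+$, not a codimension-one Pohozaev manifold: there is no second Lagrange multiplier $\mu_n$ attached to it and no ``natural constraint'' to verify. The machinery you describe (two multipliers, nondegeneracy of the fiber map at its maximum, $\mu_n\to 0$) is the one appropriate for minimization over the equality set $\partial\mathcal{T}=\{\|u\|_{X_0}^2=\|u^+\|_{NL}^{2\2}+\alpha\delta_p\|u^+\|_p^p\}$, and that would compute the wrong level, since the paper's fiber-map analysis (Proposition 2.6(iii), with $t_u>1$) shows that genuine critical points sit strictly inside $\mathcal{T}$ and that $\inf_{\partial\mathcal{T}}\tilde J_d>m_d$. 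What is actually needed is to show that a minimizing sequence stays uniformly away from $\partial\mathcal{T}$, which is exactly what the strict chain $0<\inf_{\mathcal{T}}\tilde J_d<\inf_{\partial\mathcal{T}}\tilde J_d$ delivers; it is here, and in $\mathcal{T}\neq\emptyset$, that the smallness conditions \eqref{M1}--\eqref{M3} on $d$ are consumed (coercivity on $\mathcal{T}$ needs only the defining inequality of $\mathcal{T}$, not the $d$-bounds). Once the sequence is interior, Ekeland's principle on $S_d^+$ produces a Palais--Smale sequence with the single mass multiplier.

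Two smaller points. For the exclusion of energy loss, your bubble-threshold argument (any nontrivial profile costs at least $\tfrac{\2-1}{2\2}S_{HL}^{\2/(\2-1)}$, exceeding the upper bound on $m_d$) can be made to work, but the paper's route is more elementary and does not use the upper bound at all: with $w_n=u_n-u_d$, the two Euler--Lagrange relations and Brezis--Lieb give $\|w_n\|_{X_0}^2=\|w_n\|_{NL}^{2\2}+o_n(1)$, hence $\tilde J_d(w_n)=\tfrac{\2-1}{2\2}\,l+o_n(1)\ge o_n(1)$, while $u_d\in\mathcal{T}$ forces $\tilde J_d(u_d)\ge m_d$; the splitting $m_d=\tilde J_d(u_d)+\tfrac{\2-1}{2\2}l$ then gives $l=0$ directly. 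Finally, in case $(\mathcal{A}_2)$ the theorem does not claim $\lambda_d$ has ``the opposite sign''; it claims only $\lambda_d<\lambda_{1,s}$, obtained, as you say, by pairing the equation with $\varphi_{1,s}$, whereas $\lambda_d>0$ in cases $(\mathcal{A}_1)$, $(\mathcal{A}_3)$ follows from comparing the tested equation with the membership $u_d\in\mathcal{T}$ and $\delta_p<1$.
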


\begin{rem}
    $u_d$ is a local minimizer on $S_d^+$ and $\displaystyle\inf_{S_d^+}\tilde{J}_d=-\infty$ under our construction. Therefore, we can construct a mountain pass structure on $S_d^+$ and discuss the second solution for \eqref{F}. We define
$$\beta(d):=\inf_{\gamma\in \Gamma}\sup_{t\in[0,1]}\tilde{J}_d(\gamma(t))>\max\{\tilde{J}_d(u_d),\tilde{J}_d(v)\},$$
where $$\Gamma:=\{\gamma\in C([0,1],S_d^+):\gamma(0)=u_d,\gamma(1)=v\}.$$
\end{rem} 
\begin{theorem}\label{BB}
	Consider $\Om$ be a bounded, smooth and star-shaped domain with respect to the origin and suppose $\alpha,d,p$ satisfy the conditions as in Theorem \ref{B10} and also let $2<p<2^*-1$. Then \eqref{F} has positive solution $\hat{u}_d\ne u_d$ at the $\beta(d)$, and $\hat{u}_d$ is is of M-P type.
\end{theorem}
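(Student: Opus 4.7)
The plan is to produce $\hat{u}_d$ as a critical point of $\tilde{J}_d$ on $S_d^+$ at the mountain pass level $\beta(d)$, distinct from the local minimizer $u_d$ supplied by Theorem \ref{B11}. First I would verify the mountain pass geometry on $S_d^+$: Theorem \ref{B11} gives the local minimizer $u_d$ with $\tilde{J}_d(u_d)=m_d>0$, and since $\inf_{S_d^+}\tilde{J}_d=-\infty$ under the running hypotheses (as recorded in the preceding remark), a suitable concentrated or rescaled trial element $v\in S_d^+$ produces $\tilde{J}_d(v)<\tilde{J}_d(u_d)$. This legitimizes the class $\Gamma$ and the strict inequality $\beta(d)>\max\{\tilde{J}_d(u_d),\tilde{J}_d(v)\}$.

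To obtain a bounded Palais--Smale sequence at the level $\beta(d)$, I would invoke the monotonicity technique of Jeanjean \cite{jeanjean1999existence}. The idea is to embed $\tilde{J}_d$ into a one-parameter family $\tilde{J}_d^\theta$, for $\theta$ in a neighborhood of $1$, set up so that $\theta\mapsto\beta_\theta(d)$ is monotone and therefore differentiable almost everywhere; at each point of differentiability the uniform mountain pass theorem produces a bounded PS sequence for $\tilde{J}_d^\theta$ at level $\beta_\theta(d)$ that in addition satisfies an approximate Pohozaev identity. Taking $\theta_n\uparrow 1$ and a diagonal extraction then yields a bounded PS sequence $\{u_n\}\subset S_d^+$ for $\tilde{J}_d$ at level $\beta(d)$ whose limit also obeys the Pohozaev identity for \eqref{F}.

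From this sequence I extract a weak limit $\hat{u}_d\in X_0$ with Lagrange multiplier $\hat{\lambda}_d\in\R$, and the main task is to upgrade to strong convergence in $X_0$. Writing $u_n=\hat{u}_d+w_n$ with $w_n\rightharpoonup 0$, the Brezis--Lieb splitting for the Hardy--Littlewood--Sobolev term cleanly separates energies and norms; in tandem, the fractional Pohozaev identity on the star-shaped domain $\Omega$, applied both to $\hat{u}_d$ and to the defect sequence, rules out any nontrivial bubble provided $\beta(d)$ stays strictly below the bubbling threshold $m_d+\tfrac{\2-1}{2\,\2}S_{HL}^{\2/(\2-1)}$. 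The hard part, and really the core of the argument, is to establish this strict energy estimate $\beta(d)<m_d+\tfrac{\2-1}{2\,\2}S_{HL}^{\2/(\2-1)}$. To do it I would construct an explicit competitor in $\Gamma$ that interpolates between $u_d$ and $v$ by inserting a truncated HLS extremal concentrating at an interior point of $\Omega$, and then perform the asymptotic expansion of $\tilde{J}_d$ along this path; the extra hypothesis $2<p<2^*_s-1$ is used precisely to dominate the cross terms between $u_d$ and the concentrating bubble against the subcritical perturbation.

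Strong convergence then gives $\tilde{J}_d(\hat{u}_d)=\beta(d)>m_d=\tilde{J}_d(u_d)$, so $\hat{u}_d\neq u_d$. The presence of $u^+$ in the modified functional $\tilde{J}_d$ forces $\hat{u}_d\geq 0$ on $\Omega$, and the strong maximum principle for $(-\Delta)^s$ together with the Dirichlet condition $\hat{u}_d=0$ in $\R^N\setminus\Omega$ upgrades this to $\hat{u}_d>0$ in $\Omega$. This delivers the desired second positive normalized solution of mountain pass type.
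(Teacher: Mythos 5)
Your proposal follows essentially the same route as the paper: Jeanjean's monotonicity trick applied to the family $\tilde{J}_{d,\theta}$ to produce a bounded Palais--Smale sequence at level $\beta(d)$ (with the Pohozaev identity on the star-shaped domain supplying the uniform bound), a Brezis--Lieb splitting of the defect $w_n=u_n-\hat{u}_d$, the strict threshold estimate $\beta(d)<m_d+\frac{\2-1}{2\2}S_{HL}^{\2/(\2-1)}$ obtained by gluing a truncated Hardy--Littlewood--Sobolev extremal onto $u_d$ and expanding along a path in $\Gamma$, and finally positivity via the $u^+$-truncation and the strong maximum principle. The argument and its key ingredients match the paper's Sections 3--4, so no substantive comparison is needed.
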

\textbf{Structure of the paper}: The Layout of this paper is  as follows:
\begin{itemize}
    \item In Section \ref{Sa1}, we provide some well-known results and variational framework.
    \item Section \ref{Sa2}, contains proof of Theorem
    \ref{B11}.
    \item In Section \ref{Sa3}, we establish a bounded P.S. sequence at the mountain pass energy level.
    \item Finally, Section \ref{Sa4} deals with the proof of Theorem \ref{BB}.
\end{itemize}
\maketitle
\section{Varitational framework}\label{Sa1}
In this section, we will state some important inequalities and discuss the framework that is required to prove our results:
     \begin{prop}[Hardy-Littlewood-Sobolev inequality]\label{hls}[Theorem 4.3,\cite{lieb2001analysis}]
	Let $t,r>1$ and $0<\mu<N$ with $\frac{1}{t}+\frac{\mu}{N}+\frac{1}{r}=2,f\in L^t(\R^N)$ and $h\in L^r(\R^N)$. There exists a sharp constant $C(t,r,\mu,N)$ independent of $f,h,$ such that 
	\begin{equation}\label{b1}
	\int_{\R^N}\int_{\R^N} \frac{f(x)h(y)}{|x-y|^{\mu}}\dx\dy \le C(t,r,\mu,N)\|f\|_{L^t}\|h\|_{L^r}.
	\end{equation}
	If $t=r=\frac{2N}{2N-\mu}$, then 
	$$C(t,r,\mu,N)=C(N,\mu)=\pi^{\frac{\mu}{2}}\frac{\Gamma(\frac{N}{2}-\frac{\mu}{2})}{\Gamma (N-\frac{\mu}{2})}\Big\{\frac{\Gamma(\frac{N}{2})}{\Gamma(\frac{\mu}{2})}\Big\}^{-1+\frac{\mu}{N}}.$$
	Equality holds in \eqref{b1} if and only if $f\equiv (constant)h$ and 
	$$h(x)=A(\gamma^2+|x-a|^2)^{\frac{2N-\mu}{2}},$$
	for some $A\in \mathbb{C}, 0\ne \gamma\in \R$ and $a\in \R^N$.
\end{prop}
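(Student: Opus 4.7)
The plan is to split the proof into two layers: first establishing the inequality itself (with some admissible constant) via real-variable methods, and then separately pinning down the sharp constant and the extremizers in the conformal case $t=r=\tfrac{2N}{2N-\mu}$ via symmetry arguments.

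For the inequality itself, I would rephrase the bilinear form $\iint f(x)h(y)|x-y|^{-\mu}\,dx\,dy$ as a pairing $\langle f, I_\alpha h\rangle$ with the Riesz potential $I_\alpha h(x)=\int h(y)|x-y|^{-\mu}\,dy$ where $\alpha=N-\mu$. The dual statement is then the Hardy--Littlewood--Sobolev fractional integration theorem: $I_\alpha:L^r(\R^N)\to L^{r^\ast}(\R^N)$ boundedly, where $\frac{1}{r^\ast}=\frac{1}{r}-\frac{\alpha}{N}$, which after rewriting gives precisely the scaling relation $\frac{1}{t}+\frac{1}{r}+\frac{\mu}{N}=2$ (with $t=(r^\ast)'$). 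The key technical step is a weak-type endpoint bound: splitting $|x-y|^{-\mu}=K_R+K^R$ into the near part (supported in $|x-y|<R$) and the far part, one controls $K_R\ast h$ in $L^r$ via Young's inequality and $K^R\ast h$ in $L^\infty$ by H\"older; then choosing $R$ so that the level set $\{|I_\alpha h|>2\lambda\}$ is captured by the near term alone gives a weak-type $(r,r^\ast)$ estimate. Marcinkiewicz interpolation (say between two admissible $r$'s) upgrades this to the strong bound, establishing \eqref{b1} with some (non-sharp) constant.

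For the sharp value $C(N,\mu)$ in the conformal case, I would follow Lieb's strategy. First, Riesz's rearrangement inequality gives
\[
\iint f(x)h(y)|x-y|^{-\mu}\,dx\,dy \le \iint f^\ast(x)h^\ast(y)|x-y|^{-\mu}\,dx\,dy,
\]
so any extremizing sequence can be replaced by radial decreasing functions without loss. The functional $(f,h)\mapsto \iint fh|x-y|^{-\mu}$ together with the $L^{2N/(2N-\mu)}$ norms is invariant under the full conformal group of $\R^N$: dilations, translations, and, after stereographic lift, the inversions generated by $x\mapsto x/|x|^2$. I would transport the problem to the sphere $S^N$ via stereographic projection, where the Riesz kernel becomes (a constant times) $|\xi-\eta|^{-\mu}_{S^N}$ and the functional acquires full $O(N+1)$ invariance. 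The competing symmetries argument alternates radial symmetrization on $\R^N$ with conformal rotations on $S^N$; iterating produces a sequence converging to a function simultaneously fixed by both, forcing it to be (up to the conformal group action) the standard bubble $h(x)=(\gamma^2+|x-a|^2)^{-(2N-\mu)/2}$. Plugging this extremizer back in and computing the resulting integral via a Beta-function identity on $S^N$ yields the stated expression for $C(N,\mu)$.

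The main obstacle is the sharp-constant half, specifically closing the gap between "extremizers exist and are radial" and "extremizers are exactly the stated conformal family." Pure rearrangement gives radial symmetry about one chosen origin but not about every origin; it is the interplay with inversional invariance that does the heavy lifting, and this demands the competing symmetries framework (or an equivalent reduction to showing that the only radial-about-every-point functions in the admissible class are the Jacobians of conformal maps of $S^N$). The non-sharp inequality, by contrast, is essentially a routine interpolation exercise once the Riesz potential weak-type bound is in hand.
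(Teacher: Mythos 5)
The paper never proves this proposition: it is imported as classical background, with the proof delegated entirely to the cited reference \cite{lieb2001analysis} (Theorem 4.3 there). So there is no internal argument to compare yours against; the meaningful comparison is with that source, and your outline is essentially a faithful sketch of the standard proof found there and in the surrounding literature. The first half (splitting the Riesz kernel into near and far parts, Young's inequality for the integrable near part and H\"older for the far part with $\mu r'>N$, choosing $R$ as a function of the level $\lambda$ to obtain the weak-type $(r,r^\ast)$ bound, then Marcinkiewicz interpolation) is the textbook route to the non-sharp inequality, and your exponent bookkeeping $\frac{1}{t}+\frac{1}{r}+\frac{\mu}{N}=2$ with $t=(r^\ast)'$ is correct. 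The second half (Riesz rearrangement to reduce to symmetric decreasing functions, stereographic lift to $S^N$ where the functional acquires $O(N+1)$ invariance, competing symmetries to identify the extremizers, and a Beta-function computation of the constant) is the Lieb/Carlen--Loss strategy, i.e.\ precisely the substance of the cited proof.

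Two caveats on your sketch. First, the full \emph{only if} direction of the equality statement needs an ingredient you do not mention: a characterization of equality cases in the Riesz rearrangement inequality (for a strictly symmetric-decreasing kernel such as $|x|^{-\mu}$, equality forces $f$ and $h$ to be common translates of symmetric-decreasing functions). Without this, ``extremizers exist and can be taken radial'' does not yet yield uniqueness of extremizers up to the conformal group, which is what the equality clause asserts. Second, note that the paper's statement itself contains a sign typo that your version silently corrects: the extremizer must decay, i.e.\ $h(x)=A\left(\gamma^2+|x-a|^2\right)^{-\frac{2N-\mu}{2}}$ with a negative exponent, whereas the paper prints the exponent as positive.
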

From the embedding results in \cite{bisci2016variational}, the space $X_0$ is continuously embedded  into $L^r(\R)$ with $ r\in [1,2^*_s]$ where $2^*_s= \frac{2N}{N-2s}$. 
The best constant $S$ is defined  
\begin{align*}
S= \displaystyle\inf_{u \in X_0\setminus \{ 0\}}   \frac{\displaystyle\int_{\R}\int_{\R} \frac{|u(x)-u(y)|^2}{|x-y|^{N+2s}}~\dx\dy }{\left(\displaystyle \int_{ \Om} |u(x)|^{2^*}\dx\right)^{2/2^*_s}}. 
\end{align*}
Consequently, we define
\begin{align*}
S_{HL}= \inf_{u \in X_0\setminus \{ 0\}}   \frac{\displaystyle\int_{\R^N}\int_{\R^N} \frac{|u(x)-u(y)|^2}{|x-y|^{N+2s}}~\dx\dy }{\left( \displaystyle\int_{\R^N}\int_{\R^N} \frac{|u|^{\2}(x)|u|^{\2}(y)}{|x-y|^\mu}  ~\dx\dy \right)^{\frac{1}{\2}}}. 
\end{align*}

\begin{lemma}\label{lulem13}
	\cite{mukherjee2016fractional}
	The constant $S_{HL}$ is achieved if  and only if 
	\begin{align*}
	u=C\left(\frac{b}{b^2+|x-a|^2}\right)^{\frac{N-2s}{2}}
	\end{align*} 
	where $C>0$ is a fixed constant , $a\in \mathbb{R}^N$ and $b\in (0,\infty)$ are parameters. Moreover,
	\begin{align*}
	S=	S_{HL} \left(C(N,\mu)\right)^{\frac{N-2s}{2N -\mu}}.
	\end{align*}
\end{lemma}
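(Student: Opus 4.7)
The plan is to produce a second critical point $\hat u_d$ of $\tilde J_d$ on $S_d^+$ whose energy equals the mountain pass level $\beta(d)$, and which must differ from $u_d$ since $\beta(d)>\tilde J_d(u_d)$. The mountain pass geometry is in place: by Theorem \ref{B11}, $u_d$ is a local minimizer on $S_d^+$, and by the preceding remark $\inf_{S_d^+}\tilde J_d=-\infty$, so some $v\in S_d^+$ with $\tilde J_d(v)<\tilde J_d(u_d)$ exists and any path in $\Gamma$ must climb over a strict barrier. The two serious difficulties are (i) extracting a \emph{bounded} Palais--Smale sequence at level $\beta(d)$ on a constrained set whose ambient domain is not scaling invariant, and (ii) recovering strong convergence in the presence of the Hardy--Littlewood--Sobolev critical term.

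For (i) I would follow Jeanjean's monotonicity trick, introducing for $t$ in a small left neighborhood of $1$ the perturbed family
\begin{equation*}
\tilde J_{d,t}(u)=\frac{1}{2}\|u\|_{X_0}^2-t\left(\frac{\alpha}{p}\|u^+\|_p^p+\frac{1}{2\cdot 2^*_{\mu,s}}\|u^+\|_{NL}^{2\2}\right),
\end{equation*}
and the corresponding MP values $\beta(d,t)$. The map $t\mapsto\beta(d,t)$ is monotone, hence differentiable almost everywhere; at such points the uniform minimax theorem produces a bounded P.S.\ sequence for $\tilde J_{d,t}|_{S_d^+}$ at level $\beta(d,t)$. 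This is exactly the content I would develop in Section \ref{Sa3}. The star-shapedness of $\Om$ together with the Pohozaev identity satisfied by any critical point gives the quantitative control that turns boundedness of $\|u_n\|_{X_0}$ into an a priori estimate independent of $t$ close to $1$.

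For (ii) I would combine the bounded P.S.\ sequence with a concentration--compactness analysis for the nonlocal critical term. The key ingredient is that $\beta(d)$ lies strictly below the compactness threshold $\frac{\2-1}{2\2}S_{HL}^{\frac{\2}{\2-1}}$; this is where the upper bounds on $d$ in \eqref{M1}--\eqref{M3} are used, together with suitable cut-off test functions built from the Talenti-type extremals of Lemma \ref{lulem13}. Below the threshold, any Brezis--Lieb-type splitting forbids loss of mass, so the sequence $u_n$ associated to $t_n\uparrow 1$ converges strongly in $X_0$ to some $\hat u_d\in S_d^+$, which is a critical point of $\tilde J_d|_{S_d^+}$ with energy $\beta(d)$ and Lagrange multiplier $\hat\lambda_d$ obtained as the limit of the approximating multipliers.

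Finally I would verify that $\hat u_d$ has the required qualitative properties. Testing the equation against $\hat u_d^{-}$ and using that the functional depends only on $u^+$ yields $\hat u_d\ge 0$; the strong maximum principle for $(-\Delta)^s$ then gives $\hat u_d>0$ in $\Om$. Since $\tilde J_d(\hat u_d)=\beta(d)>\tilde J_d(u_d)$, we have $\hat u_d\ne u_d$, and by construction $\hat u_d$ is of mountain pass type. The main obstacle throughout the argument is the critical HL growth: preserving the strict inequality $\beta(d)<\frac{\2-1}{2\2}S_{HL}^{\frac{\2}{\2-1}}$ along the perturbed family $\tilde J_{d,t}$ and across the limit $t\to 1$ is the delicate step, and is precisely what forces the restriction $2<p<2^*_s-1$ in the hypotheses, which is needed when estimating the MP path against truncated bubbles in the presence of the subcritical perturbation $\alpha|u|^{p-2}u$.
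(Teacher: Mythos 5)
Your proposal does not prove the statement in question. The statement is Lemma \ref{lulem13}: a characterization of the functions achieving the Hardy--Littlewood--Sobolev critical constant $S_{HL}$, together with the identity $S=S_{HL}\left(C(N,\mu)\right)^{\frac{N-2s}{2N-\mu}}$. What you have written is instead a proof sketch for Theorem \ref{BB} --- the existence of a second, mountain-pass type normalized solution $\hat u_d$ via the monotonicity trick, a bounded Palais--Smale sequence, and the compactness threshold $\frac{\2-1}{2\2}S_{HL}^{\frac{\2}{\2-1}}$. In that argument Lemma \ref{lulem13} appears only as an ingredient (through the truncated bubbles $v_\e=\zeta U_\e$ used in Section \ref{Sa4}), so nothing in your text establishes the lemma itself; the two results are logically independent, and the lemma must in fact be available \emph{before} your mountain-pass argument can even be set up.

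As for the lemma itself, the paper does not reprove it: it is quoted from \cite{mukherjee2016fractional}. A self-contained argument runs as follows. Since $\2\cdot\frac{2N}{2N-\mu}=2^*_s$, one has $|u|^{\2}\in L^{\frac{2N}{2N-\mu}}$ whenever $u\in X_0$, and Proposition \ref{hls} applied with $f=h=|u|^{\2}$ gives $\|u\|_{NL}^{2\2}\le C(N,\mu)\,\|u\|_{2^*_s}^{2\2}$; hence
\begin{equation*}
\frac{\|u\|_{X_0}^2}{\left(\|u\|_{NL}^{2\2}\right)^{1/\2}}\;\ge\;\frac{\|u\|_{X_0}^2}{\left(C(N,\mu)\right)^{1/\2}\,\|u\|_{2^*_s}^{2}}\;\ge\;S\left(C(N,\mu)\right)^{-\frac{N-2s}{2N-\mu}},
\end{equation*}
so that $S_{HL}\ge S\left(C(N,\mu)\right)^{-\frac{N-2s}{2N-\mu}}$. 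For the reverse inequality and the characterization of attainment (which pertains to the whole-space formulation), observe that equality in the HLS inequality with $f=h=|u|^{\2}$ forces $|u|^{\2}=A\left(\gamma^2+|x-a|^2\right)^{-\frac{2N-\mu}{2}}$, i.e. $u$ has exactly the stated Talenti-type form, and these functions are precisely the extremals of the fractional Sobolev constant $S$; evaluating both Rayleigh quotients along this family saturates the two inequalities simultaneously, which yields $S=S_{HL}\left(C(N,\mu)\right)^{\frac{N-2s}{2N-\mu}}$ and the ``if and only if''. One subtlety worth recording, since it matters for your own sketch: these extremals do not belong to $X_0$ when $\Om$ is a bounded domain, so $S_{HL}$ is \emph{not} attained there; this non-attainment is exactly why Section \ref{Sa4} must work with cut-off bubbles and the strict energy inequality of Proposition \ref{B9}.
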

Consider the family of minimizers $\{U_\e\}_{\e>0}$ of $S$ defined as 
\begin{align}\label{b35}
U_\e = \e^{-\frac{(N-2s)}{2}} S^{\frac{(N-\mu)(2s-N)}{4s(N-\mu+2s)}}(C(N,\mu))^{\frac{2s-N}{2(N-\mu+2s)}} u^*(x/\e)
\end{align}
where $u^*(x)= \overline{u}(x/S^{1/2s}),\; \overline{u} (x)= \frac{\tilde{u}(x)}{|\tilde{u}|_{2^*_s}}$ and $\tilde{u}(x)= a(b^2+|x|^2)^{\frac{-(N-2s)}{2}}$ with $ a \in \mathbb{R}\setminus\{0\}$ and  $b >0$ are fixed constants. Then  from Lemma \ref{lulem13}, for $\e>0, ~U_\e$ satisfies 
\begin{align*}
(-\De)^s u = (|x|^{-\mu}* |u|^{\2})|u|^{\2-2}u \text{ in } \R^N. 
\end{align*}
	 \begin{lemma}[Gagliardo-Nirenberg inequality]
	 	\cite{frank2016uniqueness} Let $p\in(2,2^*_s).$ Then, there exists a positive constant $C_p$ such that for all $u\in X_0$ 
        \begin{equation}\label{b31}
\|u\|_p^p \le C_p\|u\|_{X_0}^{p\delta_p}\|u\|_2^{p(1-\delta_p)}
\end{equation}
where $$\delta_p:=\frac{N(p-2)}{2ps}.$$	 	
	 \end{lemma}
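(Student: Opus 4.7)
The plan is to obtain \eqref{b31} by interpolating the $L^p$-norm between $L^2$ and $L^{2^*_s}$ via H\"older's inequality, and then invoking the continuous fractional Sobolev embedding $X_0 \hookrightarrow L^{2^*_s}(\R^N)$ that was recalled just above the statement. Since any $u \in X_0$ is extended by zero outside $\Om$ and belongs to $H^s(\R^N)$, the embedding $H^s(\R^N) \hookrightarrow L^{2^*_s}(\R^N)$ (which is the content of the best-constant $S$ introduced earlier) yields $\|u\|_{2^*_s} \le C \|u\|_{X_0}$ for a constant $C$ depending only on $N$ and $s$.

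For the interpolation step, fix $p \in (2, 2^*_s)$ and choose $\theta \in (0,1)$ by the standard convex-combination relation
\begin{equation*}
\frac{1}{p} \;=\; \frac{\theta}{2} + \frac{1-\theta}{2^*_s},
\end{equation*}
which is solvable because $2 < p < 2^*_s$. H\"older's inequality (applied with exponents $2/(p\theta)$ and $2^*_s/(p(1-\theta))$ after decomposing $|u|^p = |u|^{p\theta}\cdot|u|^{p(1-\theta)}$) then gives
\begin{equation*}
\|u\|_p \;\le\; \|u\|_2^{\theta}\,\|u\|_{2^*_s}^{1-\theta}.
\end{equation*}
Combining with the Sobolev embedding and raising to the $p$-th power produces
\begin{equation*}
\|u\|_p^p \;\le\; C^{\,p(1-\theta)}\,\|u\|_{X_0}^{p(1-\theta)}\,\|u\|_2^{p\theta},
\end{equation*}
which is of exactly the required shape once we set $C_p := C^{p(1-\theta)}$.

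It remains to identify the exponent $1-\theta$ with $\delta_p = \tfrac{N(p-2)}{2ps}$. Solving the defining relation for $\theta$ yields $\theta = \tfrac{2(2^*_s - p)}{p(2^*_s - 2)}$, so
\begin{equation*}
1 - \theta \;=\; \frac{2^*_s\,(p-2)}{p\,(2^*_s - 2)},
\end{equation*}
and substituting $2^*_s = \tfrac{2N}{N-2s}$ gives $\tfrac{2^*_s}{2^*_s - 2} = \tfrac{N}{2s}$, hence $1-\theta = \tfrac{N(p-2)}{2ps} = \delta_p$. Correspondingly $\theta = 1-\delta_p$, which matches the $L^2$-exponent in \eqref{b31}.

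The argument is essentially routine; the only point that requires a bit of care is verifying that $u \in X_0$ is genuinely an admissible test function for the whole-space Sobolev embedding (so that its $L^{2^*_s}$-norm is controlled by $\|u\|_{X_0}$). This is immediate from the definition of $X_0$ as the subspace of $H^s(\R^N)$ vanishing outside $\Om$, which makes the Gagliardo seminorm used in $\|\cdot\|_{X_0}$ coincide with the full $H^s(\R^N)$ seminorm. Thus no genuine obstacle arises, and the inequality follows.
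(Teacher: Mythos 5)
Your proof is correct, but note that the paper does not actually prove this lemma: it is stated as a citation to \cite{frank2016uniqueness}, where the fractional Gagliardo--Nirenberg inequality is established on the whole space $\R^N$ (there in sharp form, with the optimal constant characterized via ground states of a fractional equation). Your argument is a self-contained, more elementary alternative: decompose $|u|^p=|u|^{p\theta}|u|^{p(1-\theta)}$ with $\frac{1}{p}=\frac{\theta}{2}+\frac{1-\theta}{2^*_s}$, apply H\"older, and then control $\|u\|_{2^*_s}$ by $\|u\|_{X_0}$ through the constant $S$ already introduced in the paper, which is legitimate precisely because $\|\cdot\|_{X_0}$ is the full Gagliardo seminorm of the zero-extension over $\R^N\times\R^N$. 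Your exponent bookkeeping checks out: $\theta=\frac{2(2^*_s-p)}{p(2^*_s-2)}\in(0,1)$ since $2<p<2^*_s$, and $1-\theta=\frac{2^*_s(p-2)}{p(2^*_s-2)}=\frac{N(p-2)}{2ps}=\delta_p$ because $\frac{2^*_s}{2^*_s-2}=\frac{N}{2s}$, so the $L^2$-exponent is indeed $p(1-\delta_p)$. What the paper's citation buys is the sharp constant, which is never needed; what your route buys is self-containedness and an explicit admissible constant $C_p=S^{-p\delta_p/2}$ depending only on $N$, $s$, $p$, which is all that the subsequent arguments (e.g.\ the bounds defining $\mathcal{T}$ and the estimates in Proposition 2.1) ever use.
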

     \begin{lemma}\label{B10}
         Let $u\in S_d^+$ be a critical point of $\tilde{J}_d$. Then, $u$ satisfies the following
         \begin{equation*}
	\|u\|^2_{X_0}-\frac{\Gamma(1+s)^2}{2}\int_{\Omega}\left(\frac{\partial u}{(\delta(x))^s}\right)^2(x.\rho)\rm d\sigma=\alpha\delta_p\|u^+\|_p^p+\|u^+\|_{NL}^{2\2},
	\end{equation*}
    where $\delta(x):=$ dist$(x,\partial\Omega)$, and $\rho$ denotes the exterior unit normal to $\partial\Omega$ at $x$.
     \end{lemma}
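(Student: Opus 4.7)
The identity is a fractional Pohozaev identity tailored to the critical Choquard nonlinearity. The strategy is to combine two consequences of the Euler-Lagrange equation: pairing it with $u$ (which brings in $\lambda$) and applying the Ros-Oton-Serra fractional Pohozaev identity obtained by formally pairing with $x\cdot \nabla u$, then eliminating $\lambda$ between the two.

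First, since $u \in S_d^+$ is a critical point of $\tilde{J}_d$, the Lagrange multiplier rule yields some $\lambda \in \mathbb{R}$ with
\begin{equation*}
(-\Delta)^s u = \lambda u + \alpha (u^+)^{p-1} + \left(\int_\Omega \frac{|u^+(y)|^{\2}}{|x-y|^\mu}\,\dy\right)(u^+)^{\2-1} \quad \text{in } \Omega, \qquad u \equiv 0 \text{ in } \R^N\setminus\Omega.
\end{equation*}
Testing this equation against $u$ (and noting that $u(u^+)^{k-1} = (u^+)^k$ for $k>1$) produces
\begin{equation*}
\|u\|_{X_0}^2 = \lambda d + \alpha \|u^+\|_p^p + \|u^+\|_{NL}^{2\cdot \2}.
\end{equation*}
Next, I invoke the fractional Pohozaev identity of Ros-Oton and Serra, which, for the equation above, reads
\begin{equation*}
\int_\Omega (x\cdot \nabla u)\,(-\Delta)^s u\,\dx = \frac{2s-N}{2}\|u\|_{X_0}^2 - \frac{\Gamma(1+s)^2}{2}\int_{\partial\Omega}\left(\frac{u}{\delta^s}\right)^2 (x\cdot \rho)\,d\sigma.
\end{equation*}
The same integral is evaluated from the equation term by term via integration by parts in $\Omega$ (the boundary contributions vanish since $u\equiv 0$ on $\partial\Omega$): the $\lambda u$ term yields $-\tfrac{N\lambda d}{2}$, the $\alpha(u^+)^{p-1}$ term yields $-\tfrac{N\alpha}{p}\|u^+\|_p^p$, and the Choquard term is handled by writing $(x\cdot \nabla u)(u^+)^{\2-1} = \tfrac{1}{\2}\,x\cdot\nabla (u^+)^{\2}$, integrating by parts once in $x$ inside the double integral, and symmetrising through the swap $(x,y)\mapsto(y,x)$ which replaces $x\cdot(x-y)$ by $\tfrac{1}{2}|x-y|^2$; the result is $-\tfrac{2N-\mu}{2\cdot \2}\|u^+\|_{NL}^{2\cdot \2} = -\tfrac{N-2s}{2}\|u^+\|_{NL}^{2\cdot \2}$.

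Equating the two expressions for $\int_\Omega (x\cdot \nabla u)(-\Delta)^s u\,\dx$ and substituting $\lambda d$ from the energy identity eliminates $\lambda$; the $\|u\|_{X_0}^2$ contributions telescope to a multiple of $s$, the $\|u^+\|_p^p$ coefficient reduces to $\alpha N(p-2)/(2p) = \alpha s \delta_p$, and the Choquard coefficient reduces to $s$. Dividing through and rearranging produces exactly the stated identity (the precise factor in front of the boundary integral depending on the normalisation convention for $(-\Delta)^s$ versus the Gagliardo seminorm $\|\cdot\|_{X_0}$).

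\textbf{Main obstacle.} The delicate part is not the algebra but the justification of the Pohozaev identity for our specific equation, since the boundary term $(u/\delta^s)^2$ only makes sense once one knows that $u\in C^s(\overline{\Omega})$ and that $u/\delta^s$ extends continuously to $\partial\Omega$. A priori the Choquard source $(|x|^{-\mu}*|u^+|^{\2})(u^+)^{\2-1}$ is only integrable to a moderate exponent, so I would run a Hardy-Littlewood-Sobolev / Moser bootstrap on Proposition~\ref{hls} to obtain arbitrarily good integrability of the right-hand side, and then apply the Ros-Oton-Serra boundary regularity to conclude. The symmetric double-integral manipulation for the Choquard contribution is standard but must be written carefully, since it is the only place where the critical exponent $\2=(2N-\mu)/(N-2s)$ enters precisely in the form that cancels the $(N-2s)/2$ factor from the fractional Pohozaev identity.
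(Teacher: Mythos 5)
Your proposal is correct and follows essentially the same route as the paper, whose proof consists of citing the Ros-Oton--Serra fractional Pohozaev identity together with its adaptation to the Choquard term in \cite{mukherjee2016fractional}; your reconstruction (Nehari identity plus Pohozaev identity, eliminate $\lambda$, symmetrize the double integral so that $\frac{2N-\mu}{2\cdot 2^*_{\mu,s}}=\frac{N-2s}{2}$) is exactly what those citations carry out, and your remark about the boundary regularity $u/\delta^s\in C(\overline{\Omega})$ is the genuine hypothesis those references verify. The only discrepancy is an overall normalisation of the boundary term (your computation yields $\frac{\Gamma(1+s)^2}{2s}$ rather than $\frac{\Gamma(1+s)^2}{2}$ after dividing by $s$), which you correctly flag and which is immaterial since only the sign of $x\cdot\rho$ is used downstream.
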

     \begin{proof}
        We have the desired result using \cite{mukherjee2016fractional, ros2014pohozaev} and that $u\in S_d^+$ is a critical point of $\tilde{J}_d$.
        \QED
    \end{proof}
    Observe that $\Omega$ is a star-shaped domain, so $x.\rho>0.$ It shows every critical point of $\tilde{J}_d$ will belong to $\mathcal{T}$ where 
	\begin{equation}\label{T}
	\mathcal{T}:=\{u\in S_d^+: \|u\|^2_{X_0}>\|u^+\|_{NL}^{2\2} +\alpha\delta_p \|u^+\|_p^p  \}.
    \end{equation}
	\begin{lemma}
		$\tilde{J}_d$ is coercive and bounded below on $\mathcal{T}$.
	\end{lemma}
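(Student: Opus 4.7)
The plan is to exploit the defining inequality of $\mathcal{T}$, namely $\|u\|_{X_0}^2 > \|u^+\|_{NL}^{2\2} + \alpha\delta_p\|u^+\|_p^p$, to replace the critical nonlocal term in $\tilde J_d$ by a bound involving only $\|u\|_{X_0}^2$ and $\|u^+\|_p^p$, and then show that in each of the three parameter regimes the resulting expression is coercive and nonnegative. A preliminary observation is the algebraic inequality $p\delta_p < 2\cdot 2^*_{\mu,s}$ valid for every $p\in(2,2^*_s)$ and every $\mu\in(0,N)$, which I would verify once and for all by a one-line check at $p=2^*_s$ (it reduces to $\mu<N$) together with the monotonicity of $p\mapsto p\delta_p=\frac{N(p-2)}{2s}$. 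This inequality is the workhorse of the proof.

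In the cases $(\mathcal{A}_1)$ and $(\mathcal{A}_3)$ I would just substitute $\|u^+\|_{NL}^{2\2}<\|u\|_{X_0}^2-\alpha\delta_p\|u^+\|_p^p$ directly into $\tilde J_d$, obtaining
\begin{equation*}
\tilde J_d(u) > \Bigl(\tfrac{1}{2}-\tfrac{1}{2\cdot 2^*_{\mu,s}}\Bigr)\|u\|_{X_0}^2 - \alpha\Bigl(\tfrac{1}{p}-\tfrac{\delta_p}{2\cdot 2^*_{\mu,s}}\Bigr)\|u^+\|_p^p.
\end{equation*}
The first coefficient is positive because $2^*_{\mu,s}>1$, and the bracket multiplying $\|u^+\|_p^p$ is positive by the preliminary inequality. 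When $\alpha=0$ only the $X_0$-term survives; when $\alpha<0$ the $\|u^+\|_p^p$-term also contributes a nonnegative amount. In both situations one gets $\tilde J_d(u)\geq c\|u\|_{X_0}^2$ with $c>0$, which yields coercivity and the lower bound $\tilde J_d\geq 0$ on $\mathcal{T}$.

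The main obstacle is the $L^2$-supercritical regime $(\mathcal{A}_2)$, where $\alpha>0$ produces an unfavorable sign in the estimate above and, because $p\delta_p>2$, a direct application of Gagliardo--Nirenberg would give a negative term of higher order than $\|u\|_{X_0}^2$. To bypass this, I would use the algebraic decomposition $\tfrac{1}{2}=\bigl(\tfrac{1}{2}-\tfrac{1}{p\delta_p}\bigr)+\tfrac{1}{p\delta_p}$ and rewrite
\begin{equation*}
\tilde J_d(u)=\Bigl(\tfrac{1}{2}-\tfrac{1}{p\delta_p}\Bigr)\|u\|_{X_0}^2+\tfrac{1}{p\delta_p}\Bigl(\|u\|_{X_0}^2-\alpha\delta_p\|u^+\|_p^p-\tfrac{p\delta_p}{2\cdot 2^*_{\mu,s}}\|u^+\|_{NL}^{2\2}\Bigr).
\end{equation*}
The factor $\tfrac{1}{p\delta_p}$ is chosen precisely so that the bracketed expression matches the $\mathcal{T}$-inequality; applying the latter gives a lower bound $(1-\tfrac{p\delta_p}{2\cdot 2^*_{\mu,s}})\|u^+\|_{NL}^{2\2}$, which is nonnegative by the preliminary inequality. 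Since $p>2+4s/N$ forces $p\delta_p>2$, the coefficient $\tfrac{1}{2}-\tfrac{1}{p\delta_p}$ is strictly positive, and one concludes $\tilde J_d(u)\geq (\tfrac{1}{2}-\tfrac{1}{p\delta_p})\|u\|_{X_0}^2$. In all three regimes the final estimate is of the type $\tilde J_d(u)\geq c\|u\|_{X_0}^2$ with an explicit $c>0$, which simultaneously delivers the claimed lower bound and coercivity.
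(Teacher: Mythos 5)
Your proposal is correct and follows essentially the same route as the paper: in cases $(\mathcal{A}_1)$ and $(\mathcal{A}_3)$ you substitute the defining inequality of $\mathcal{T}$ into the critical term to obtain the bound $\frac{2^*_{\mu,s}-1}{2\cdot 2^*_{\mu,s}}\|u\|_{X_0}^2$ plus a nonnegative $\|u^+\|_p^p$-contribution, and in case $(\mathcal{A}_2)$ your splitting $\frac12=(\frac12-\frac{1}{p\delta_p})+\frac{1}{p\delta_p}$ is exactly the computation behind the paper's stated estimate $\tilde J_d(u)>(\frac12-\frac{1}{p\delta_p})\|u\|_{X_0}^2$. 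Your explicit verification of $p\delta_p<2\cdot 2^*_{\mu,s}$ is a detail the paper leaves implicit but uses in both cases.
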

\begin{proof}
	For any $u\in \mathcal{T}$ and $(\mathcal{A}_1)$ or $(\mathcal{A}_3)$ holds then, we have
	\begin{align}\label{b3}
	\notag \tilde{J}_d(u)&>\frac{\2-1}{2\2}\|u\|^2_{X_0}+\alpha\left(\frac{\delta_p}{2\2}-\frac{1}{p}\right)\|u^+\|_p^p\\
	&\ge \frac{\2-1}{2\2}\|u\|_{X_0}^2.
	\end{align} 
 Similarly, for $(\mathcal{A}_2)$, we have
  \begin{equation}\label{b4}
  \tilde{J}_d(u)> \left(\frac{1}{2}-\frac{1}{p\delta_p}\right)\|u\|_{X_0}^2.
  \end{equation}
  This implies $\tilde{J}_d$ is bounded from below and coercive on $\mathcal{T}$.\QED 
  \end{proof}
  \begin{rem}
       Define
  $$\partial\mathcal{T}:=\{u\in S_d^+: \|u\|_{X_0}^2= \|u^+\|_{NL}^{2\2}+\alpha\delta_p \|u^+\|_p^p\}. $$
  Then, using \eqref{b31}, under the assumption of $(\mathcal{A}_1)$ or
$(\mathcal{A}_3)$, we obtain that for any $u\in \partial \mathcal{T}$,
  \begin{align}\label{b5}
  \|u\|^2_{X_0}= \|u^+\|_{NL}^{2\2}+\alpha\delta_p \|u^+\|_p^p
  \le\|u^+\|^{2\2}_{NL}\le S_{HL}^{-\2}\|u\|_{X_0}^{2\2}.
  \end{align}
  Under the assumption $(\mathcal{A}_2)$, for $u\in\partial \mathcal{T}$
  \begin{align}\label{b6}
   \|u\|^2_{X_0}&= \|u^+\|^{2\2}_{NL}+\alpha\delta_p \|u^+\|^p_p\le S_{HL}^{\2}\|u\|_{X_0}^{\2}+\alpha\delta_p C^p_pd^{p(1-\delta_p)}\|u\|_{X_0} ^{p\delta_p}.
  \end{align}
  Hence, $\|u\|^2_{X_0}$ has a positive lower bound on $\partial\mathcal{T}$. Also, note that the lower bound depends on $L^2-$ norm of $u$, if $\alpha>0$.
  \end{rem}

\begin{prop}\label{B2}
	Let  $u$ be a critical point of $\tilde{J}_d|_{S_d^+}$, then the following holds
	\begin{itemize}
		\item[(i)] Any sequence $\{u_n\} \subset \mathcal{T}$ satisfying $\limsup\limits_{n\to\infty}J(u_n)<\infty$ is bounded in  $X_0$.
		\item[(ii)] If \eqref{M1}, \eqref{M2} or \eqref{M3} holds, then $\mathcal{T}\ne \emptyset$ and 
		\begin{equation}\label{b7}
		0<\inf_{u\in \mathcal{T}}\tilde{J}_d(u)<\inf_{u\in \partial\mathcal{T}} \tilde{J}_d(u).
		\end{equation}
		\item[(iii)] If $u\in \mathcal{T}$, there exists a unique $t_u>1$, such that $u^{t_u}\in \partial\mathcal{T},$ where  $u^t:=t^{\frac{N}{2}}u(tx),~ t\ge 1$ (as $\Omega$ is bounded domain, we can not allow $t<1$).  
	\end{itemize} 
\end{prop}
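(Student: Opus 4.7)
Part (i) is immediate from the coercivity estimates \eqref{b3} and \eqref{b4} established just above: under $(\mathcal{A}_1)$ or $(\mathcal{A}_3)$ one has $\tilde{J}_d(u_n) > \frac{\2-1}{2\2}\|u_n\|_{X_0}^2$, while under $(\mathcal{A}_2)$ one has $\tilde{J}_d(u_n) > \left(\frac{1}{2}-\frac{1}{p\delta_p}\right)\|u_n\|_{X_0}^2$ with a strictly positive coefficient since $p>2+\frac{4s}{N}$. Either bound forces $\{\|u_n\|_{X_0}\}$ to be bounded as soon as $\limsup_n \tilde{J}_d(u_n)<\infty$.

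For part (ii), the strategy is to produce test functions via the $L^2$-preserving rescaling $w\mapsto \sqrt{d}\,w$, which sends $S_1^+$ into $S_d^+$ and satisfies $\|\sqrt{d}\,w\|_{X_0}^2=d\|w\|_{X_0}^2$, $\|(\sqrt{d}\,w)^+\|_{NL}^{2\2}=d^{\2}\|w^+\|_{NL}^{2\2}$ and $\|(\sqrt{d}\,w)^+\|_p^p=d^{p/2}\|w^+\|_p^p$. A direct computation then shows that the first block inside the $\min$ of each of \eqref{M1}--\eqref{M3} is precisely the quantitative condition ensuring $\sqrt{d}\,w\in\mathcal{T}$ (with the parameter $\xi$ in \eqref{M2}--\eqref{M3} recording how the defining inequality of $\mathcal{T}$ is split between the $NL$-piece and the $p$-piece), so the hypothesis on $d$ yields $\mathcal{T}\neq\emptyset$. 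Positivity of $\inf_{\mathcal{T}}\tilde{J}_d$ follows from combining \eqref{b3}/\eqref{b4} with the continuous embedding $\|u\|_2\leq C\|u\|_{X_0}$ for $u\in X_0$: on $S_d^+$ this gives $\|u\|_{X_0}\geq\sqrt{d}/C$ and hence a uniform positive lower bound on $\tilde{J}_d$. For the strict inequality $\inf_{\mathcal{T}}\tilde{J}_d<\inf_{\partial\mathcal{T}}\tilde{J}_d$ I would insert the lower bound on $\|u\|_{X_0}^2$ over $\partial\mathcal{T}$ coming from \eqref{b5}/\eqref{b6} into \eqref{b3}/\eqref{b4} to produce an explicit positive lower bound for $\tilde{J}_d$ on $\partial\mathcal{T}$; the second block inside the $\min$ of each of \eqref{M1}--\eqref{M3} is then exactly the quantitative condition that some rescaled $\sqrt{d}\,w$ lies in $\mathcal{T}$ with $\tilde{J}_d(\sqrt{d}\,w)$ strictly below this bound.

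Part (iii) is the core of the proposition. I would use the dilation $u^t(x):=t^{N/2}u(tx)$: since $\Omega$ is star-shaped about the origin, $u^t\in X_0$ for every $t\geq 1$, and the change of variables $\xi=tx$, $\eta=ty$ yields
\begin{equation*}
\|u^t\|_{X_0}^2=t^{2s}\|u\|_{X_0}^2,\quad \|(u^t)^+\|_{NL}^{2\2}=t^{2s\2}\|u^+\|_{NL}^{2\2},\quad \|(u^t)^+\|_p^p=t^{N(p-2)/2}\|u^+\|_p^p,
\end{equation*}
while $\|(u^t)^+\|_2^2=\|u^+\|_2^2=d$, so $u^t\in S_d^+$ for every $t\geq 1$. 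Set
\begin{equation*}
\Phi(t):=\|u\|_{X_0}^2\,t^{2s}-\|u^+\|_{NL}^{2\2}\,t^{2s\2}-\alpha\delta_p\|u^+\|_p^p\,t^{N(p-2)/2},
\end{equation*}
so that $u^t\in\partial\mathcal{T}\iff\Phi(t)=0$. Since $u\in\mathcal{T}$ we have $\Phi(1)>0$, and since $2s\2$ is the largest exponent (one checks $2\2>p\delta_p$ for $p<2^*_s$ using $\mu<N$), $\Phi(t)\to-\infty$ as $t\to\infty$; the intermediate value theorem supplies some $t_u>1$ with $\Phi(t_u)=0$. Uniqueness would follow by dividing by $t^{2s}$ and analysing the equation $\|u\|_{X_0}^2=\|u^+\|_{NL}^{2\2}\,t^{2s(\2-1)}+\alpha\delta_p\|u^+\|_p^p\,t^{N(p-2)/2-2s}$: in cases $(\mathcal{A}_1)$--$(\mathcal{A}_2)$ the right-hand side is strictly increasing in $t$ and hence meets $\|u\|_{X_0}^2$ at exactly one value; in case $(\mathcal{A}_3)$ the two terms have opposite signs, but a short derivative computation shows that the right-hand side has at most one critical point (a minimum), whence the level equation again has a unique solution $t_u>1$.

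The main technical obstacle, in my view, is the bookkeeping in part (ii): one has to match simultaneously the $\xi$-parameter (which splits the $\mathcal{T}$-defining inequality between the $NL$ and $p$ contributions) and the $\tau$-parameter (which splits the energy bound analogously), so that a single $w\in S_1^+$ and a single rescaling $\sqrt{d}\,w$ witnesses both $\sqrt{d}\,w\in\mathcal{T}$ and $\tilde{J}_d(\sqrt{d}\,w)$ strictly below the $\partial\mathcal{T}$-threshold. Part (iii) is more mechanical, but the uniqueness under $(\mathcal{A}_3)$ still requires some care with the sign patterns.
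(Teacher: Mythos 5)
Your proposal is correct and follows essentially the same route as the paper: part (i) from the coercivity bounds \eqref{b3}--\eqref{b4}, part (ii) by testing with the $L^2$-preserving rescalings $\sqrt{d}\,w$ for $w\in S_1^+$ against the $\partial\mathcal{T}$ lower bounds \eqref{b5}--\eqref{b6}, and part (iii) via the dilation fibering map $t\mapsto\tilde{J}_d(u^t)$ with the exponent comparison $2\2>\max\{2,p\delta_p\}$. Your uniqueness discussion under $(\mathcal{A}_3)$ is in fact slightly more explicit than the paper's, which simply asserts the sign change of $\Phi'$, but the underlying argument is the same.
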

\begin{proof} Using Lemma \ref{B10} and the fact that $\Omega$ is star-shaped with respect to origin, we have $u\in \mathcal{T}$.\par 
	\textit{(i)} Let $\{u_n\}\subset \mathcal{T}$ be a sequence such that it satisfies $\limsup\limits_{n\to\infty}\tilde{J}_d(u_n)<\infty$. If $(\mathcal{A}_1)$ or $(\mathcal{A}_3)$ occurs then, by \eqref{b3} we have
	$$\limsup_{n\to\infty}\|u_n\|^2_{X_0}\le \left(\frac{2\2}{\2-1}\right)\limsup_{n\to\infty}\tilde{J}_d(u_n)<\infty. $$
	 Similarly, if $(\mathcal{A}_2)$ holds then, 
 using \eqref{b4}, we obtain
	 $$\limsup_{n\to\infty}\f \le \frac{2p\delta_p}{p\delta_p-2}\limsup_{n\to\infty}\tilde{J}_d(u_n)<\infty.$$
	 Thus, the sequence $\{u_n\}$ is bounded in $X_0$.\\
	 \textit{(ii)} The proof of this part, we will divide into 3 parts based on $(\mathcal{A}_1),~(\mathcal{A}_2)$ and $(\mathcal{A}_3)$. 
     \textbf{Case 1:} If $(\mathcal{A}_1)$ holds\par
     Let $u\in S_1^+$ and $d<\left(\frac{\f}{\|u^+\|_{NL}^{2\2}}\right)^{\frac{1}{\2-1}}$ , we obtain that
	 \begin{align*}
	\|\sqrt{d}u\|_{X_0}^2&=d\f\ge d^{\2}\|u^+\|_{NL}^{2\2}=\|\sqrt{d}u^+\|_{NL}^2.
	 \end{align*}
	 Hence, $\sqrt{d}u\in \mathcal{T}$, i.e., $\mathcal{T}\ne\emptyset$. Moreover, using \eqref{b5}, we get
	 $$\inf_{u\in \partial\mathcal{T}}\f\ge S_{HL}^{\frac{\2}{\2-1}}.$$
	 By using \eqref{b3} we have
	 $$\inf_{u\in \partial\mathcal{T}}\tilde{J}_d(v)\ge\frac{\2-1}{2\2}\f\ge\frac{\2-1}{2\2}S_{H,L}^{\frac{\2}{\2-1}}.$$
     For any $u\in S_1^+ $ and $d<\left(\left(\frac{\2-1}{2\2}\right)S_{HL}^{\frac{\2}{\2-1}}\frac{2}{\|u\|_{X_0}^2}\right)$, we obtain
     \begin{equation*}
         \tilde{J}_d(\sqrt{d}u)<\frac{d}{2}\f\le \frac{\2-1}{2\2}S_{HL}^{\frac{\2}{\2-1}}.
     \end{equation*}
     
	 When \eqref{M1} holds, then for some $u\in S_1^+$ such that $\sqrt{d}u\in \mathcal{T}$ and 
	 $$\inf_{u\in\mathcal{T}}\tilde{J}_d(u)\le \tilde{J}_d(\sqrt{d}u)<\inf_{u\in \partial\mathcal{T}}\tilde{J}_d(u).$$
	 Furthermore, using \eqref{b3} we obtain
	 $$\inf_{u\in\mathcal{T}}\tilde{J}_d(u)\ge \left(\frac{\2-1}{2\2}\right)\inf_{u\in \mathcal{T}}\f\ge \frac{\2-1}{2\2}\lambda_{1,s}d>0.$$
	 Therefore, \eqref{b7} holds.\par
	\textbf{Case 2:} If ($\mathcal{A}_2$) hold\\
    Let $u\in S_1^+$ and some $\xi \in (0,1),$ with  $d<\left\{\left(\xi\frac{\f}{\|u^+\|_{NL}^2 }\right)^{\frac{1}{\2-1}}, \left((1-\xi)\frac{\f}{\alpha\delta_p\|u^+\|_p^p} \right)^{\frac{2}{p-2}}\right\}$, implies that
	 \begin{align*}
	 \|\sqrt{d}u\|^2_{X_0}&=d\f\\
	 &=d\xi\f+d(1-\xi)\f\\
	 &>d^{\2}\|u^+\|_{NL}^2+\alpha\delta_pd^{\frac{p}{2}}\|u^+\|_p^p\\
	 &=\|\sqrt{d}u^+\|_{NL}^{2\2}+\alpha\delta_p\|\sqrt{d}u^+\|_p^p.
	 \end{align*}
	 Therefore, $\sqrt{d}u\in\mathcal{T}$, i.e., $\mathcal{T}\ne\emptyset$. By using \eqref{b6}, we deduce for any $\tau \in (0,1)$,\par
	 $$\inf_{u\in \partial\mathcal{T}}\|u\|_{X_0}\ge \left(\tau S_{HL}^{\2} \right)^{\frac{1}{\2-2}}$$
	 or
	 $$\inf_{u\in \partial\mathcal{T}}\|u\|_{X_0}\ge \left(\frac{1-\tau}{\alpha\delta_pC_p^pd^{p(1-\delta_p)/2}}\right)^{\frac{1}{p\delta_p-2}}.$$
	 From \eqref{b4}, for any $\tau \in (0,1)$, we have
	 $$\inf_{u\in \partial\mathcal{T}}\tilde{J}_d(u)\ge \left(\frac{1}{2}-\frac{1}{p\delta_p} \right)\min\left\{ \left(\tau S_{HL}^{\2} \right)^{\frac{2}{\2-2}},\left(\frac{1-\tau}{\alpha\delta_pC_p^pd^{p(1-\delta_p)/2}}\right)^{\frac{2}{p\delta_p-2}}\right \}.$$
	 For any $u\in S_1^+$ and
	 \begin{align*}
	 d<\max_{\tau\in(0,1)}&\left\{\min\left\{\left(\frac{1}{2}-\frac{1}{p\delta_p} \right)\left(\tau S_{HL}^{\2} \right)^{\frac{2}{\2-2}}\frac{2}{\|u\|_{X_0}^2},\right.\right. \\
     & \left.\left. \qquad \left(\frac{1-\tau}{\alpha\delta_pC_p^p} \right)^{\frac{2}{p-2}}\left(\left(\frac{1}{2}-\frac{1}{p\delta_p}\right)\frac{2}{\|u\|_{X_o}^2}\right)^{\frac{p\delta_p-2}{p-2}}\right\}\right\},
	 \end{align*}
	 there exists some $\tau_0\in (0,1)$ such that
	 \begin{align*}
	 \tilde{J}_d(\sqrt{d}u)&<\frac{d}{2}\f<  \left(\frac{1}{2}-\frac{1}{p\delta_p} \right)\min\left\{ \left(\tau_0 S_{HL}^{\2} \right)^{\frac{2}{\2-2}},\left(\frac{1-\tau_0}{\alpha\delta_pC_p^pd^{p(1-\delta_p)/2}}\right)^{\frac{2}{p\delta_p-2}}\right \}\le \inf_{u\in \partial\mathcal{T}}\tilde{J}_d(u).
	 \end{align*}
	When \eqref{M2} holds, we can choose some  $u\in S_1^+$ such that $\sqrt{d}u\in \mathcal{T}$ and
	 $$\inf_{u\in \mathcal{T}}\tilde{J}_d(u)\le \tilde{J}_d(\sqrt{d}u)<\inf_{u\in \partial\mathcal{T}}\tilde{J}_d(u).$$
	 Furthermore, by \eqref{b4} we obtain
	 $$\inf_{u\in \mathcal{T}}\tilde{J}_d(u)\ge \left(\frac{1}{2}-\frac{1}{p\delta_p} \right)\inf_{u\in \mathcal{T}}\f\ge \left(\frac{1}{2}-\frac{1}{p\delta_p} \right)\lambda_{1,s}d>0,$$
	 i.e. \textit{(ii)} holds.\par
     \textbf{Case 3:} If $(\mathcal{A}_3)$ holds\par
     Let $u\in S_1^+$ and $d<\left(\frac{\|u\|_{X_0}^2}{\|u^+\|_{NL}^{2\2}}\right)^{\frac{1}{\2-1}}$, we have
     \begin{align*}
         \|\sqrt{d}u\|_{X_0}^2&=d\|u\|_{X_0}^2     >d^{\2}\|u^+\|_{NL}^{2\2}
         >\|(\sqrt{d}u)^+\|_{NL}^{2\2}+\alpha\|(\sqrt{d}u)^+\|_p^p.
         \end{align*}
         Therefore, $\sqrt{d}u\in\mathcal{T}$, and so $\mathcal{T}\ne\emptyset.$ With the help of \eqref{b5}, we get
          $$\inf_{u\in \partial\mathcal{T}}\f\ge S_{HL}^{\frac{\2}{\2-1}}.$$
	 By using \eqref{b3} we have
	 $$\inf_{u\in \partial\mathcal{T}}\tilde{J}_d(v)\ge\frac{\2-1}{2\2}\f\ge\frac{\2-1}{2\2}S_{H,L}^{\frac{\2}{\2-1}}.$$
         For any $u\in S_1^+,$
         \begin{equation*}
             d<\max_{\xi\in(0,1)}\left\{\min\left\{\xi\frac{\2-1}{2\2}(S_{HL})^{\frac{\2}{\2-1}}\frac{2}{\f},\left((1-\xi)\frac{\2-1}{2\2}(S_{HL})^{\frac{\2}{\2-1}}\frac{p}{|\alpha|\|u^+\|_p^p} \right)^{\frac{2}{p}} \right\} \right\},
         \end{equation*}
        so, for some $\xi_0\in(0,1)$ such that
        \begin{align*}
            \tilde{J}_d(\sqrt{d}u)&<\frac{d}{2}\|u\|_{X_0}^2+\frac{|\alpha|}{p}d^{\frac{p}{2}} \|u^+\|_p^p\\
            &<\xi_0\frac{\2-1}{2\2}S_{H,L}^{\frac{\2}{\2-1}}+(1-\xi_0)\frac{\2-1}{2\2}S_{H,L}^{\frac{\2}{\2-1}}=\frac{\2-1}{2\2}S_{H,L}^{\frac{\2}{\2-1}}\\
            &<\inf_{u\in\partial\mathcal{T}}\tilde{J}_d(u).
        \end{align*}
        When \eqref{M3} holds, then for some $u\in S_1^+$, such that $\sqrt{d}u\in \mathcal{T}$ and 
	 $$\inf_{u\in\mathcal{T}}\tilde{J}_d(u)\le \tilde{J}_d(\sqrt{d}u)<\inf_{u\in \partial\mathcal{T}}\tilde{J}_d(u).$$
	 Furthermore, using \eqref{b3} we obtain
	 $$\inf_{u\in\mathcal{T}}\tilde{J}_d(u)\ge \left(\frac{\2-1}{2\2}\right)\inf_{u\in \mathcal{T}}\f\ge \frac{\2-1}{2\2}\lambda_{1,s}d>0.$$
     This completes the proof of \textit{(ii)}.\par
	 \textit{(iii)} Since $u^t(x)=t^{\frac{N}{2}}u(tx),t\ge1.$ For any $u\in S_d^+$, we have $u^t\in S_d^+$. Define $\Phi(t):=\tilde{J}_d(u^t)$, then 
	 \begin{equation*}
	 \Phi(t)=\frac{t^{2s}}{2}\|u\|^2_{X_0}-\frac{\alpha t^{sp\delta_p}}{p}\|u^+\|_p^p-\frac{t^{s2\2}}{2.2^*_{\mu,s}}\|u^+\|_{NL}^{2\2}.
	 \end{equation*}
	 It implies
	 \begin{align*}
	 \Phi'(t)&=st\f-s\alpha\delta_pt^{p\delta_p-1}\|u^+\|_p^p-st^{2\2-1}\|u^+\|^{22^*_{\mu,s}}_{NL}\\
	 &=st^{-1}\left(\|u^t\|^2_{X_0}     -\alpha\delta_p \|(u^t)^+\|_p^p-\|(u^t)^+\|_{NL}^{2\2} \right).
	 \end{align*}
	 Therefore, $u^t\in \mathcal{T}$ if and only if $\Phi'(t)>0$ and $u^t\in \partial\mathcal{T}$ if and only if $\Phi'(t)=0$. As $2^*_s>\max\{ 2,p\delta_p\}$, so we can obtain a unique $t_u>0$, such that, $\Phi'(t_u)=0$ and $\Phi'(t)>0$ in $(0,t_u)$ and $\Phi'(t)>0$ in $(t_u,\infty)$. Also, $u\in \mathcal{T}$ implies that $\Phi'(1)>0$, and therefore $t_u>1$. Further, $\Phi'(t_u)=0$ shows that $u^{t_u}\in \partial\mathcal{T}.$
\QED
\end{proof}

\maketitle
\section{Existence of a positive normalized solution}\label{Sa2}
In this section, we prove the existence of a positive solution of \eqref{F} using the minimization technique. \par
\textbf{Proof of Theorem \ref{B11} :} Assume $\{u_n\}\subset\mathcal{T}$ be a minimizing sequence of $m_d$,  then by Proposition \ref{B2} \textit{(i)}, $\{u_n\}$ sequence is bounded in $X_0$. Suppose, if possible, there exists $\{v_n\} \in \partial\mathcal{T}$ such that $u_n-v_n \to 0$ in $X_0$ up to a subsequence. As $\{u_n\}$ is bounded, so $\{v_n\}$ is bounded in $X_0$. So we obtain 
$$m_d=\lim_{n\to\infty}\tilde{J}_d(u_n)=\lim_{n\to\infty}\tilde{J}_d(v_n)\ge \inf_{v\in \partial\mathcal{T}}\tilde{J}_d(v) ,$$
which contradicts to \eqref{b7}. Therefore, $\{u_n\}$ is away from $\partial\mathcal{T}$.\par
By Ekeland's variational principle, we have that $\tilde{J}_d'|_{S_d^+}(u_n)=\tilde{J}_d'|_{\mathcal{T}}(u_n)\to 0$ as $n\to\infty$. Up to subsequences, assume that
\begin{align*}
u_n&\rightharpoonup u_d~~\text{weakly}~\text{in}~X_0,\\
u_n&\to u_d~~\text{strongly}~\text{in}~L^{p}(\Om),~\text{for}~2\le p<\2,\\
(|x|^{-\mu}*|u_n|^{\2})|u_n|^{\2}&\rightharpoonup (|x|^{-\mu}*|u_d|^{\2})|u_d|^{\2}~~\text{weakly}
~\text{in}~ L^{\frac{2N}{N+2s}}(\Om),\\
u_n&\to u_d~~\text{a.e. in }~\Omega.
\end{align*} 
Using trivial arguments, one can prove that $u_d\in S_d^+$ is a critical point of $\tilde{J}_d$ with constraints on $S_d^+$. Now employing  Proposition \ref{B2} \textit{(i)}, it follows that $u_d\in \mathcal{T}$ and thus $\tilde{J}_d(u_d)\ge m_d$.\par
On the other hand, let us assume that $w_n=(u_n-u_d)$. Since $(\tilde{J}_d|_{S_d^+})'(u_n)\to 0$ as $n\to\infty$, therefore by Lagrange multiplier rule, there exists $\lambda_n$ such that $\tilde{J}_d'(u_n)-\lambda_n u_n^+\to 0 $ as $n\to\infty$. If $\lambda_d$ is Lagrange multiplier corresponding to $u_d$, then for $\zeta \in X_0$ with $\int_{\Omega}u_d^+\zeta \dx\ne 0$, we have
\begin{equation*}
\lambda_n=\frac{1}{\int_{\Omega}u_n^+\zeta \dx}(\langle \tilde{J}_d'(u_n),\zeta\rangle+o_n(1))\to \frac{1}{\int_{\Omega}u_d^+\zeta \dx}\langle \tilde{J}_d'(u_d),\zeta\rangle=\lambda_d~\text{as}~n\to\infty.
\end{equation*}
By using the Brezis-Lieb Lemma \cite{brezis1983relation} and the fact that $\tilde{J}_d'(u_n)-\lambda_nu_n^+\to0,$ $\tilde{J}_d'(u_d)-\lambda_du_d^+=0,$
we obtain$$\|w_n\|^2_{X_0}=\|w_n\|_{NL}^{2\2}+o_n(1).$$
Let $\|w_n\|^2_{X_0}\to l\ge0$. Applying Brezis-Lieb Lemma, we have
\begin{align*}
\tilde{J}_d(w_n)=&\frac{1}{2}\|w_n\|^2_{X_0}-\frac{1}{2\2}\|w_n\|_{NL}^{2\2}+o_n(1)\\
=&\frac{\2-1}{2\2}l+o_n(1). 
\end{align*}
It implies $\tilde{J}_d(w_n)\ge o_n(1)$, and thus $m_d=\lim\limits_{n\to\infty}\tilde{J}_d(u_n)\ge \tilde{J}_d(u_d)$. Therefore, $\tilde{J}_d(u_d)=m_d$, and $l=0$ which implies $u_n\to u_d$ strongly in $X_0$. Subsequently, $u_d$ is a solution of the following equation 
\begin{equation}\label{b8}
(-\Delta)^su_d= \lambda u^+_d +\alpha|u^+_d|^{p-2}u_d+ \left( \int\limits_{\Omega} \frac{|u^+_d(y)|^{2^{*}_{\mu ,s}}}{|x-y|^ \mu}\, \dy\right)  |u_d^+|^{2^{*}_{\mu ,s}-2}u\; \text{in} \; \Omega,
\end{equation}
for some $\lambda_d\in \R$. Multiplying \eqref{b8} by $u_d^-$ and integrating on $\Omega$, we get $\|u^-_d\|_{X_0}=0$. Consequently, $u_d\ge 0$. By the strong maximum principle \cite{giacomoni2020regularity, cheng2017maximum}, we have $u_d>0$. Therefore, $\|u_d\|_2^2=\|u_d^+\|_2^2=d$, and $u_d$ solves \eqref{F}.
Now, we will prove that $\lambda_d>0$. For $\alpha\le0$, we obtain
\begin{align*}
\|u_d\|_{X_0}^2&=\lambda_dd+\alpha\|u_d\|_p^p+\|u_d\|^{22^*_{\mu,s}}_{NL}>\alpha\delta_p\|u_d\|^p_p+\|u_d\|^{22^*_{\mu,s}}_{NL},
\end{align*}
as $u_d\in\mathcal{T}$. Since $\delta_p<1$, this implies $\lambda_dd>\alpha(\delta_p-1)\|u_d\|_p^p.$ Therefore, $\lambda_d>0$. For $\al\geq 0 $, multiplying \eqref{F} by  $\phi_{1,s}$ ($\phi_{1,s}$ is the positive eigenvalue corresponding to $\lambda_{1,s}$) and  integrating it on $\Omega$, we obtain 
\begin{align*}
\lambda_d\int_{\Omega}u_d\phi_{1,s}\dx+\int_{\Omega}\int_{\Omega}\frac{|u(x)|^{\2}|u(y)|^{\2-2}u(y)\phi_{1,s}}{|x-y|^{\mu}}\dx\dy&+\alpha\int_{\Omega}|u_d|^{p-2}u_d\phi_{1,s}\dx\\&=\int_{\Omega}(-\Delta)^{\frac{s}{2}}u_d(-\Delta)^{\frac{s}{2}}\phi_{1,s}\dx\\
&=\lambda_{1,s}\int_{\Omega}u_d\phi_{1,s}\dx.
\end{align*}
It gives us $\lambda_d<\lambda_{1,s}$. Hence the proof follows.\QED
\maketitle
\section{Boundedness of P.S. sequence}\label{Sa3}
The aim of this section is to prove the boundedness of the Palais-Smale sequence of the functional $\tilde{J}_d$. To prove this, we define a family of functionals ($\tilde{J}_{d,\theta}$) and prove the boundedness of the Palais-Smale sequence of the

For  $\theta\in [1/2,1]$. Let
\begin{equation*}
 \tilde{J}_{d,\theta}=\left\{  \begin{array}{cc}
 \frac{1}{2}\|u\|_{X_0}^2-\theta\left(\frac{\alpha}{p}\|u^+\|_p^p+\frac{1}{2.2^*_{\mu,s}}\|u^+\|^{22^*_{\mu,s}}_{NL}\right)  &\text{if} \; \alpha>0,\\
\frac{1}{2}\|u\|_{X_0}^2-\frac{\alpha}{p}\|u^+\|^p_p-\frac{\theta}{2.2^*_{\mu,s}}\|u^+\|^{22^*_{\mu,s}}_{NL}&\text{if}~\alpha\le0. 
\end{array} \right.
\end{equation*}
 Subsequently we define 
$$\mathcal{T}_{\theta}:=\left\{u\in S_d^+: \|u\|_{X_0}^2\dx> \theta\left(\|u^+\|^{2\2}_{NL}+\alpha\delta_p \|u^+\|_p^p\right) \right\}, ~~\alpha>0$$
$$\mathcal{T}_{\theta}:=\left\{u\in S_d^+: \|u\|_{X_0}^2> \theta\|u^+\|^{2\2}_{NL}+\alpha\delta_p \|u^+\|_p^p   \right\},~~\alpha\le0. $$
Notice that \(\mathcal{T} \subset \mathcal{T}_{\theta}\) for \(\theta < 1\), which implies that \(\mathcal{T}_{\theta} \neq \emptyset\). Define
$$m_{d,\theta}:=\inf_{\mathcal{T}_{\theta}}\tilde{J}_{d,\theta}.$$
\begin{lemma}\label{B4}
	Let $d$ satisfy conditions \eqref{M1}, \eqref{M2}, \eqref{M3} depending on $\alpha$ and $p$. Then, we obtain the following results:
	\begin{itemize}
		\item[(i)]$\lim\limits_{\theta\to1^-}\inf\limits_{\partial\mathcal{T}}\tilde{J}_{d,\theta}=\inf\limits_{\partial\mathcal{T}}\tilde{J}_d,$
		\item[(ii)] $\liminf\limits_{\theta\to1^-}\inf\limits_{\mathcal{T}_{\theta}\backslash \mathcal{T}}\tilde{J}_{d,\theta}>\inf\limits_{\mathcal{T}}\tilde{J}_d,$
		\item[(iii)] $\lim\limits_{\theta\to1^-}m_{d,\theta}=m_d.$
	\end{itemize}
\end{lemma}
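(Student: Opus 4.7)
The starting point is two simple observations. First, for every $\theta \in [1/2,1]$ and every $u \in S_d^+$,
\begin{equation*}
\tilde{J}_{d,\theta}(u) - \tilde{J}_d(u) = (1-\theta)\,R(u) \;\ge\; 0,
\end{equation*}
where $R(u)$ is a nonnegative combination of the $L^p$ and nonlocal terms (the $L^p$ term is absent when $\alpha\le 0$). Second, the defining inequalities show $\mathcal{T}\subset \mathcal{T}_\theta$ whenever $\theta<1$. In particular the coercivity estimates \eqref{b3}--\eqref{b4} extend to $\tilde{J}_{d,\theta}$ on $\mathcal{T}_\theta$, uniformly in $\theta\in[1/2,1]$.

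For \textit{(i)}, the monotonicity gives $\inf_{\partial\mathcal{T}}\tilde{J}_{d,\theta}\ge \inf_{\partial\mathcal{T}}\tilde{J}_d$. For the reverse, given $\e>0$ I would pick $v\in\partial\mathcal{T}$ with $\tilde{J}_d(v)<\inf_{\partial\mathcal{T}}\tilde{J}_d+\e$; since $v$ is fixed, $R(v)$ is a fixed constant, so $\tilde{J}_{d,\theta}(v)=\tilde{J}_d(v)+(1-\theta)R(v)<\inf_{\partial\mathcal{T}}\tilde{J}_d+2\e$ for $\theta$ close enough to $1$. Letting $\e\to 0$ closes the argument.

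Part \textit{(ii)} is the heart of the proof. For $u\in\mathcal{T}_\theta\setminus\mathcal{T}$, the condition $u\notin\mathcal{T}$ reads $\|u\|^2_{X_0}\le \|u^+\|_{NL}^{2\2}+\alpha\delta_p\|u^+\|_p^p$. Plugging in the Hardy--Littlewood--Sobolev inequality and the Gagliardo--Nirenberg estimate \eqref{b31} (and using $\|u^+\|_2^2\le d$) yields
\begin{equation*}
1 \le S_{HL}^{-\2}\,\|u\|_{X_0}^{2\2-2} + \alpha\delta_p C_p^p\, d^{p(1-\delta_p)/2}\,\|u\|_{X_0}^{p\delta_p-2},
\end{equation*}
which (since both exponents are strictly positive in cases $\mathcal{A}_1$--$\mathcal{A}_3$) forces $\|u\|_{X_0}^2\ge R_d>0$ uniformly in $\theta$. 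Next, using the inclusion $\mathcal{T}_\theta$ (which says $\|u\|_{X_0}^2>\theta(\|u^+\|_{NL}^{2\2}+\alpha\delta_p\|u^+\|_p^p)$) to eliminate the nonlinear terms from $\tilde{J}_{d,\theta}$ in the spirit of \eqref{b3}--\eqref{b4}, a straightforward algebraic manipulation (separated by the sign of $\alpha$) gives
\begin{equation*}
\tilde{J}_{d,\theta}(u) > \theta\cdot\frac{\2-1}{2\2}\|u\|_{X_0}^2 \quad(\mathcal{A}_1,\mathcal{A}_3), \qquad \tilde{J}_{d,\theta}(u) > \theta\Big(\tfrac{1}{2}-\tfrac{1}{p\delta_p}\Big)\|u\|_{X_0}^2 \quad(\mathcal{A}_2),
\end{equation*}
using the sign conditions $\alpha(\delta_p/2-1/p)\ge 0$ for $\mathcal{A}_2$ and $\alpha(1/(2\2)-1/(p\delta_p))\ge 0$ for $\mathcal{A}_3$ that follow from the ranges of $p$. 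Combining with $\|u\|_{X_0}^2\ge R_d$ and comparing with the explicit upper bound on $m_d$ from Theorem \ref{B11} produces a uniform constant $\eta>0$ such that $\tilde{J}_{d,\theta}(u)\ge m_d+\eta$ on $\mathcal{T}_\theta\setminus\mathcal{T}$ for all $\theta$ close to $1$, which is stronger than the required $\liminf$ statement.

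Part \textit{(iii)} is then essentially bookkeeping. From $\mathcal{T}\subset \mathcal{T}_\theta$ and the minimizer $u_d$ produced in Theorem \ref{B11} I get $m_{d,\theta}\le \tilde{J}_{d,\theta}(u_d)=\tilde{J}_d(u_d)+(1-\theta)R(u_d)\to m_d$, so $\limsup_{\theta\to 1^-}m_{d,\theta}\le m_d$. For the reverse, split $\mathcal{T}_\theta=\mathcal{T}\sqcup(\mathcal{T}_\theta\setminus\mathcal{T})$: on $\mathcal{T}$, $\tilde{J}_{d,\theta}\ge \tilde{J}_d\ge m_d$, while part \textit{(ii)} gives $\liminf\inf_{\mathcal{T}_\theta\setminus\mathcal{T}}\tilde{J}_{d,\theta}>m_d$. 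Hence $\liminf_{\theta\to1^-}m_{d,\theta}\ge m_d$. The main obstacle I anticipate is matching the constants in the lower bound on $\tilde{J}_{d,\theta}|_{\mathcal{T}_\theta\setminus\mathcal{T}}$ from part \textit{(ii)} with the slightly unusual upper bounds on $m_d$ in \eqref{M1}--\eqref{M3}; this is where the $\sup_{u\in S_1^+}$ in the smallness conditions is precisely what is needed.
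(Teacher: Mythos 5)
Your argument is correct and follows essentially the same route as the paper: monotonicity $\tilde{J}_{d,\theta}\ge\tilde{J}_d$ plus evaluation near a minimizer for (i), a uniform lower bound on $\|u\|_{X_0}$ over $\mathcal{T}_\theta\setminus\mathcal{T}$ (from $u\notin\mathcal{T}$ via Hardy--Littlewood--Sobolev and Gagliardo--Nirenberg) combined with the coercivity from $u\in\mathcal{T}_\theta$ and the smallness of $d$ for (ii), and the sandwich via $u_d\in\mathcal{T}\subset\mathcal{T}_\theta$ together with the splitting $\mathcal{T}_\theta=\mathcal{T}\cup(\mathcal{T}_\theta\setminus\mathcal{T})$ for (iii). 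The only differences are cosmetic (you use a fixed near-minimizer in (i) where the paper uses a bounded minimizing sequence, and you bypass the auxiliary dilation estimate \eqref{b9}); the constants you obtain differ slightly from the paper's but converge to the same limits as $\theta\to1^-$, so nothing is lost.
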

\begin{proof}\textit{(i)}
 By definition, $\tilde{J}_{d,\theta}(u)\ge \tilde{J}_d(u)$, for all $u\in \partial\mathcal{T}$. It implies
 \begin{equation*}	\inf\limits_{\partial\mathcal{T}}\tilde{J}_{d,\theta}\ge \inf\limits_{\partial\mathcal{T}}\tilde{J}_d,~~\forall~\theta\in[1/2,1].
\end{equation*}
	Thus, it is sufficient to show that $$\limsup\limits_{\theta\to1^-}\inf\limits_{\partial\mathcal{T}}\tilde{J}_{d,\theta}\le\inf\limits_{\partial\mathcal{T}}\tilde{J}_d.$$ 
    Let $\{u_n\}\subset \partial\mathcal{T}$ be a minimizing sequence, such that $\lim\limits_{n\to\infty}\tilde{J}_d(u_n)=\inf\limits_{\partial\mathcal{T}}\tilde{J}_d$. One can easily prove that $\{u_n\}$ is bounded in $X_0$. Therefore, we obtain
	\begin{align*}
\inf_{\partial\mathcal{T}}\tilde{J}_{d,\theta}\le\liminf\limits_{n\to\infty}\tilde{J}_{d,\theta}(u_n)=\lim\limits_{n\to\infty}\tilde{J}_d(u_n)+o(1)=\inf_{\partial\mathcal{T}}\tilde{J}_d+o(1),
	\end{align*}
	where $o(1)\to0$ as $\theta\to 1^-$. Hence, we have
    $$\limsup\limits_{\theta\to 1^-}\inf\limits_{\partial\mathcal{T}}\tilde{J}_{d,\theta}\le \inf\limits_{\partial\mathcal{T}}\tilde{J}_d.$$
	\par 
	\textit{(ii)} Let $\{u_n\}\subset \partial\mathcal{T}$ be a bounded minimizing sequence, such that $\lim\limits_{n\to \infty}\tilde{J}_d(u_n)=\inf\limits_{\partial\mathcal{T}}\tilde{J}_d$. Suppose $\{\theta_n\},~\{t_n\}$ are two sequences with $\theta_n\to1^-$ and  $t_n\to 1^+$, such that $u_n^{t_n}\in \mathcal{T}_{\theta_n}\backslash\mathcal{T}.$ Under these conditions, we derive 	$$\inf_{\mathcal{T}_{\theta_n}\backslash\mathcal{T}}\tilde{J}_d\le \tilde{J}_d(u_n^{t_n})=\tilde{J}_d(u_n)+o_n(1).$$   
	For $n\to\infty$, we have $\displaystyle\limsup\limits_{n\to\infty}\inf_{\mathcal{T}_{\theta_n}\backslash\mathcal{T}}\tilde{J}_d\le\inf_{\partial\mathcal{T}}\tilde{J}_d,$ which implies 
	\begin{equation}\label{b9}
	\limsup\limits_{\theta\to1^-} \inf\limits_{\mathcal{T}_{\theta_n}\backslash\mathcal{T}}\tilde{J}_d\le\inf\limits_{\partial\mathcal{T}}\tilde{J}_d.
	\end{equation}
	
	Moreover, for any $u\in \mathcal{T}_{\theta_n}\backslash\mathcal{T}$, it follows that
	$$\|u\|_{X_0}^2<\|u^+\|^{2\2}_{NL}+\alpha\delta_p \|u^+\|_p^p.$$
	Thus, under assumption $(\mathcal{A}_1)$ and $(\mathcal{A}_3)$, we can assert that
	$$\inf_{\mathcal{T}_{\theta_n}\backslash\mathcal{T}}\|u\|_{X_0}^2\ge S_{H,L}^{\frac{\2}{\2-1}}.$$
	In the similar manner, for the assumption $(\mathcal{A}_2)$, it can be established that for any  $\tau\in (0,1)$,
	
	$$\inf_{u\in\mathcal{T}_{\theta}\backslash \mathcal{T}}\f \ge \left(\tau S_{HL}^{\2} \right)^{\frac{1}{\2-2}}, $$
	or	$$\inf_{u\in\mathcal{T}_{\theta}\backslash \mathcal{T}}\f  \ge \left(\frac{1-\tau}{\alpha\delta_pC_ p^pd^{p(1-\delta_p)/2}}\right)^{\frac{2}{p\delta_p-2}}. $$

	Let $\{v_n\}$ be a minimizing sequence in $\mathcal{T}_{\theta}\backslash\mathcal{T}$, such that $\lim\limits_{n\to\infty}\tilde{J}_d(v_n)=\inf\limits_{\mathcal{T}_{\theta}\backslash \mathcal{T}}\tilde{J}_d$. For $(\mathcal{A}_1)$ and $(\mathcal{A}_3)$, we have	
	$$	\|v_n\|^2_{X_0}> \|v_n^+\|^{2\2}+\alpha\delta_p \|v_n^+\|_p^p.$$
	Thus, when $\theta$ approaches  $1^-$, such that $p\delta_p<2\2\theta$, we establish the relation
	\begin{align}\label{b10}
	\notag \tilde{J}_d(v_n)&>\left(\frac{1}{2}-\frac{1}{2\2\theta} \right)\|v_n|_{X_0}^2+\alpha\left(\frac{\delta_p}{2\2\theta}-\frac{1}{p}\right)\|v_n^+\|_p^p\\
	&\ge \left(\frac{1}{2}-\frac{1}{2\2\theta} \right)\|v_n\|_{X_0}^2.
	\end{align}
	For  assumption $(\mathcal{A}_2)$, we have
	$$\|v_n\|_{X_0}^2> \theta\left(\|v_n^+(y)|^{2\2}_{NL}+\alpha\delta_p \|v_n^+\|_p^p\right). $$
	Therefore, for $\theta$ near to $1^-$, such that $p\delta_p\theta>2$ we obtain
	\begin{equation}\label{b11}
	\tilde{J}_d(v_n)>\left(\frac{1}{2}-\frac{1}{p\delta_p\theta}\right)\|v_n\|_{X_0}^2.
	\end{equation}
Utilizing \eqref{b9},\eqref{b10} and \eqref{b11}, we conclude that the sequence $\{v_n\}$ is uniformly bounded in $X_0$ as $\theta$ approaches $1^-$. Given that $d$ satisfies any one of \eqref{M1}, \eqref{M2} or \eqref{M3}, we can easily prove the desired result using the same arguments as in the proof of Proposition~\ref{B2} \textit{(ii)}.\par 
\textit{(iii)} Using the fact that  $u_d\in \mathcal{T}\subset\mathcal{T}_{\theta}$, we obtain
\begin{equation*}
m_{d,\theta}\le \tilde{J}_{d,\theta}(u_d)=\tilde{J}_d(u_d)+o_{\theta}(1)=m_d+o_{\theta}(1). 
\end{equation*}
Applying limit $\theta\to1^-$, we get
\begin{equation*}
\limsup\limits_{n\to\infty} m_{d,\theta}\le m_d.
\end{equation*}
Let $\{\theta_n\}$ be any sequence, such that,  $\{\theta_n\}\ra 1^-$, and $\{w_n\}\subset \mathcal{T}_{\theta_n}$ be a sequence such that $\tilde{J}_{d,\theta_n}(w_n)\to m_{d,\theta_n}$ as $n\to\infty.$ By using  $\limsup\limits_{n\to\infty} m_{d,\theta_n}\le m_d$ and $\{w_n\}\subset \mathcal{T}_{\theta}$, we conclude that $\{w_n\}$ is bounded in $X_0$.
Moreover, previous analysis confirms that $\{w_n\}\subset\mathcal{T}$, for large $n$. Since
$$\limsup\limits_{n\to\infty} \tilde{J}_d(w_n)=\limsup\limits_{n\to\infty} \tilde{J}_{d,\theta_n}(w_n)=\limsup\limits_{n\to\infty} m_{d,\theta_n}\le m_d.$$
This leads to
$$\tilde{J}_{d,\theta_n}(w_n)=\tilde{J}_d(w_n)+o_n(1)\ge m_d+o_n(1).$$
Taking limits as $n\to\infty$, we derive
$$\liminf\limits_{n\to\infty} m_{d,\theta_n}=\liminf\limits_{n\to\infty} \tilde{J}_{d,\theta_n}(w_n)\ge m_d,$$
which implies
$$\liminf\limits_{\theta\to 1^-} m_{d,\theta}\ge m_d.$$
Hence, the result follows. \QED
\end{proof}
\begin{lemma}\label{B5}
	Let $d$ satisfy any one of the conditions \eqref{M1}, \eqref{M2}, \eqref{M3}, according to $\alpha$ and $p$, then
    \begin{itemize}
 \item[(i)] there exist $\epsilon\in (0,1/2)$ and $\delta>0$ independent of $\theta$, such that 
	\begin{equation}\label{b12}
	\tilde{J}_{d,\theta}(u_d)+\delta<\inf_{\partial\mathcal{T}}\tilde{J}_{d,\theta},~~~\forall \theta\in (1-\epsilon,1],
	\end{equation}
	and $v\in S_d^+\backslash\mathcal{T}$, such that
	$$\beta_{\theta}:=\inf_{\gamma\in \Gamma}\sup_{t\in[0,1]}\tilde{J}_{d,\theta}(\gamma(t))>\tilde{J}_{d,\theta}(u_d)+\delta=\max\{\tilde{J}_{d,\theta}(u_d),\tilde{J}_{d,\theta}(v) \}+\delta,$$
	where
	$$\Gamma:=\{\gamma\in C([0,1],S_d^+):\gamma(0)=u_d,\gamma(1)=v\},$$
	is independent of $\theta$.
   \item[(ii)]  $\lim\limits_{\theta\to 1^-}\beta_{\theta}=\beta_1$.
     
    \end{itemize}
\end{lemma}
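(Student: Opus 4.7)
The plan is to exploit the strict gap $m_d < \inf_{\partial\mathcal{T}} \tilde{J}_d$ obtained in Proposition~\ref{B2}(ii) and Theorem~\ref{B11} in order to produce a mountain pass geometry for every $\tilde{J}_{d,\theta}$ with $\theta$ sufficiently close to $1$. Setting $\eta := \inf_{\partial\mathcal{T}} \tilde{J}_d - \tilde{J}_d(u_d) > 0$ and $\delta := \eta/4$, I first note that $\tilde{J}_{d,\theta}(u_d) - \tilde{J}_d(u_d)$ equals $(1-\theta)$ times a fixed quantity depending only on $u_d$, so $\tilde{J}_{d,\theta}(u_d) \to \tilde{J}_d(u_d)$ as $\theta \to 1^-$; combined with Lemma~\ref{B4}(i) ($\inf_{\partial\mathcal{T}} \tilde{J}_{d,\theta} \to \inf_{\partial\mathcal{T}} \tilde{J}_d$), I pick $\epsilon \in (0, 1/2)$ small enough that both differences are bounded by $\delta$ for every $\theta \in (1-\epsilon, 1]$. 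This yields
\begin{equation*}
\tilde{J}_{d,\theta}(u_d) + 2\delta \;<\; \inf_{\partial\mathcal{T}} \tilde{J}_{d,\theta}, \qquad \forall\,\theta \in (1-\epsilon, 1],
\end{equation*}
which is \eqref{b12} with a slightly larger margin than required.

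For the second endpoint of the mountain pass paths I set $v := u_d^{t^\star}$ with $t^\star > 1$ large, where $u^t(x) := t^{N/2} u(tx)$ as in Proposition~\ref{B2}(iii). Since this scaling preserves the $L^2$-norm and star-shapedness of $\Om$ keeps the support of $u_d^t$ inside $\Om$, one has $v \in S_d^+$. As in the computation of $\Phi(t)$ in the proof of Proposition~\ref{B2}(iii), the leading term (as $t\to\infty$) of both $\tilde{J}_{d,\theta}(u_d^t)$ and
\begin{equation*}
F(u_d^t) \;:=\; \|u_d^t\|_{X_0}^2 - \|(u_d^t)^+\|_{NL}^{2\2} - \alpha\,\delta_p\,\|(u_d^t)^+\|_p^p
\end{equation*}
is the negative one proportional to $t^{2s\cdot\2}$, with a coefficient uniform in $\theta \in [1/2, 1]$. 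Hence for $t^\star$ large enough, I obtain simultaneously $v \in S_d^+ \setminus \overline{\mathcal{T}}$ and $\tilde{J}_{d,\theta}(v) < \tilde{J}_{d,\theta}(u_d)$ for all $\theta \in [1/2, 1]$. Since $F$ is continuous on $X_0$ with $F(u_d) > 0$ and $F(v) < 0$, the intermediate value theorem forces every $\gamma \in \Gamma$ to meet $\partial\mathcal{T} = \{F = 0\}$, so
\begin{equation*}
\sup_{t\in[0,1]}\tilde{J}_{d,\theta}(\gamma(t)) \;\ge\; \inf_{\partial\mathcal{T}}\tilde{J}_{d,\theta} \;>\; \tilde{J}_{d,\theta}(u_d) + \delta \;=\; \max\{\tilde{J}_{d,\theta}(u_d), \tilde{J}_{d,\theta}(v)\} + \delta,
\end{equation*}
and taking the infimum over $\gamma$ finishes (i).

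For (ii), in both sign cases of $\alpha$ the map $\theta \mapsto \tilde{J}_{d,\theta}(u)$ is nonincreasing for every fixed $u \in S_d^+$, because the $\theta$-dependent piece of the functional has a nonnegative coefficient and enters with a minus sign; in particular $\tilde{J}_{d,\theta}(u) \ge \tilde{J}_d(u)$ for all $\theta \le 1$, which gives $\beta_\theta \ge \beta_1$ for every $\theta \in (1-\epsilon, 1]$. For the reverse inequality, fix $\eta>0$ and pick a near-optimal $\gamma^\star \in \Gamma$ with $\sup_{t \in [0,1]} \tilde{J}_d(\gamma^\star(t)) \le \beta_1 + \eta/2$. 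The image $\gamma^\star([0,1])$ is compact in $X_0$, so the quantities $\|u^+\|_p^p$ and $\|u^+\|_{NL}^{2\2}$ stay uniformly bounded on this image, which forces $\tilde{J}_{d,\theta}(\gamma^\star(t)) - \tilde{J}_d(\gamma^\star(t)) \to 0$ uniformly in $t$ as $\theta \to 1^-$. Thus for $\theta$ close enough to $1$ one has $\beta_\theta \le \sup_t \tilde{J}_{d,\theta}(\gamma^\star(t)) \le \beta_1 + \eta$, and sending $\eta \to 0^+$ yields $\limsup_{\theta \to 1^-} \beta_\theta \le \beta_1$, completing the proof. The main obstacle is to make the gap $\delta$ and the path endpoint $v$ uniform in $\theta$, which is exactly what Lemma~\ref{B4} was constructed to provide.
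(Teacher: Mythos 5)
Your proposal is correct and follows essentially the same route as the paper: both obtain \eqref{b12} from the strict gap $m_d<\inf_{\partial\mathcal{T}}\tilde J_d$ of Proposition \ref{B2}(ii) together with the $\theta$-convergence of $\tilde J_{d,\theta}(u_d)$ and of $\inf_{\partial\mathcal{T}}\tilde J_{d,\theta}$ (Lemma \ref{B4}), take $v=u_d^{t}$ with $t$ large using the uniform-in-$\theta$ blow-down of $\tilde J_{d,\theta}(u^t)$, force every path to cross $\partial\mathcal{T}$, and prove (ii) by the monotonicity $\tilde J_{d,\theta}\ge\tilde J_d$ plus a near-optimal path. The only differences are cosmetic (your $\delta=\eta/4$ versus the paper's $2\delta=\eta$, and your slightly more explicit justification of the uniform convergence along the compact image of $\gamma^\star$).
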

\begin{proof}\textit{(i)}
  Using  Proposition \ref{B2} \textit{(ii)} and Lemma \ref{B4} \textit{(ii)} and the fact that  $\lim\limits_{\theta\to1^-}\tilde{J}_{d,\theta}(u_d)=\tilde{J}_d(u_d)=m_d$,
we deduce that 
$$\lim\limits_{\theta\to 1^-}\tilde{J}_{d,\theta}(u_d)=m_d<\inf_{\partial\mathcal{T}}\tilde{J}_d=\lim\limits_{\theta\to 1^-}\inf_{\partial\mathcal{T}}\tilde{J}_{d,\theta}.$$
Defining  $2\delta:=(\inf\limits_{\partial\mathcal{T}}\tilde{J}_d-m_d)$, and  taking $\epsilon$ small enough, we obtain \eqref{b12}. Let 
$u^t=t^{\frac{N}{2}}u(tx)\in S_d^+$ for $t\ge1$ and $u\in S_d^+$.  Using the fact that  $2\2>2^*_s>\max\{2,p\delta_p\}$  under the assumption  $(\mathcal{A}_1)$ and $(\mathcal{A}_3)$, we obtain
\begin{align*}
\tilde{J}_{d,\theta}(u^t)&=\frac{1}{2}\|u^t\|_{X_0}^2-\frac{\alpha}{p}\|(u^t)^+|_p^p-\frac{\theta}{2.2^*_{\mu,s}}\|(u^t)^+\|_{NL}^{2\2}\\
&\le\frac{t^{2s}}{2}\|u\|_{X_0}^2+\frac{|\alpha| t^{sp\delta_p}}{p}\|u^+\|_p^p-\frac{t^{s2\2}}{2.2^*_{\mu,s}}\|u^+\|^{22^*_{\mu,s}}_{NL}\\
&\to -\infty~\text{uniformly w.r.t}~\theta\in [\frac{1}{2},1], ~t\to\infty.
\end{align*}
Under the assumption  $(\mathcal{A}_2)$, we obtain  
\begin{align*}
\tilde{J}_{d,\theta}(u^t)&=\|u^t\|_{X_0}^2-\theta\left(\frac{\alpha}{p}\|(u^t)^+|_p^p+\frac{1}{2.2^*_{\mu,s}}\|(u^t)^+\|^{22^*_{\mu,s}}_{NL}\right)\\
&\le\frac{t^{2s}}{2}\|u\|_{X_0}^2-\frac{1}{2}\left(\frac{\alpha t^{sp\delta_p}}{p}\|u^+\|_p^p+\frac{t^{s2\2}}{2.2^*_{\mu,s}}\|u^+\|^{22^*_{\mu,s}}_{NL}\right)\\
&\to -\infty~\text{uniformly w.r.t}~\theta\in [\frac{1}{2},1], ~ t\to\infty. 
\end{align*}
Let  $v=u^t$  then  we can find  $t$ large enough such that $\tilde{J}_{d,\theta}(v)<\tilde{J}_{d,\theta}(u_d)$. Using a similar argument as in Proposition \ref{B2} \textit{(iii)}, we can assume $v\not\in\mathcal{T}$. Then for any $\gamma\in \Gamma$, there exists $\tilde{t}\in (0,1)$ such that $\gamma(\tilde{t})\in \partial\mathcal{T}$. Therefore,
$$\inf_{\gamma\in \Gamma}\sup_{t\in[0,1]}\tilde{J}_{d,\theta}(\gamma(t))\ge\inf_{u\in \partial\mathcal{T}}\tilde{J}_{d,\theta}(u).$$
\textit{(ii)} Let $u\in S_d^+$, then $\tilde{J}_{d,\theta}(u)\ge \tilde{J}_d(u)$ for $\theta<1.$ This implies that $\lim\limits_{\theta\to 1^-}\inf
\beta_{\theta}\ge\beta_1$. Now, we  claim that $\lim\limits_{\theta\to 1^-}\sup \beta_{\theta}\le\beta_1.$ Using the definition of $\beta_{1},  $ for any $\epsilon>0$ we can take a $\gamma_0\in \Gamma$ such that
$$\sup_{t\in[0,1]}\tilde{J}_d(\gamma_0(t))<\beta_{1}+\epsilon.$$
For any $\theta_n\to 1^-$, we have 
\begin{align*}
\beta_{\theta_n}&=\inf_{\gamma\in \Gamma}\sup_{t\in[0,1]}\tilde{J}_{d,\theta_n}(\gamma(t))\\&\le\sup_{t\in[0,1]}\tilde{J}_{d,\theta_n}(\gamma_0(t))\\&=\sup_{t\in[0,1]}\tilde{J}_d(\gamma_0(t))+o_n(1)\\&<\beta_{1}+\epsilon+o_n(1).
\end{align*}
By arbitrariness of $\epsilon$, we have
$$\lim\limits_{n\to\infty}\beta_{\theta_n}\le\beta_{1},$$
which implies
$$\lim\limits_{\theta\to1^-}\sup \beta_{\theta}\le\beta_{1}.$$\qed
\end{proof} 
\begin{prop}[Monotonicity trick\cite{jeanjean1999existence}]\label{B6}
	Let $I\subset \R^+$  be an interval. Consider a family $(\tilde{J}_{d,\theta})_{\theta\in I}$ of $C^1-$ functionals on $X_0$ of the form
	$$\tilde{J}_{d,\theta}(u)=A(u)-\theta B(u),~~\theta\in I$$
	where $B(u)\ge 0,\forall~ u\in X_0$ and such that $A(u)\to\infty$ or $B(u)\to\infty$ as $\|u\|_{X_0}\to\infty$. Suppose $w_1,w_2\in S_d^+$ (independent of $\theta$), such that 
	$$\Gamma=\{\gamma\in C([0,1],S_d^+), \gamma(0)=w_1,\gamma(1)=w_2\},$$
for all $\theta\in I$,
$$\beta_{\theta}:=\inf_{\gamma\in \Gamma}\sup_{t\in[0,1]}\tilde{J}_{d,\theta}(\gamma(t))>\max\{\tilde{J}_{d,\theta}(w_1),\tilde{J}_{d,\theta}(w_2)\}.$$
Then, for almost every $\theta\in I$, there exists a sequence $\{w_n\}\subset S_d^+$, such that
\begin{itemize}
	\item[(i)] $\{w_n\}$ is bounded in $X_0$,
	\item[(ii)] $\tilde{J}_{d,\theta}(w_n)\to\beta_{\theta},$
	\item[(iii)] $(\tilde{J}_{d,\theta})|_{S_d^+}(w_n)\to 0$ in $X_0'.$
\end{itemize} 
\end{prop}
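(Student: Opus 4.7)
The plan is to follow Jeanjean's original monotonicity argument \cite{jeanjean1999existence}, adapted to the constraint manifold $S_d^+$. The starting observation is that, because $B(u)\geq 0$, for any path $\gamma\in\Gamma$ the function $\theta\mapsto \sup_{t\in[0,1]}\tilde{J}_{d,\theta}(\gamma(t))$ is non-increasing, hence $\theta\mapsto \beta_\theta$ is non-increasing on $I$. By the classical Lebesgue theorem, a monotone real-valued function is differentiable at almost every point of $I$, so we fix once and for all such a point $\theta\in I$ where $\beta_\theta'$ exists and work there.

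Next I would produce an almost-maximizing sequence of paths. Pick $\theta_n\nearrow\theta$ with $\theta-\theta_n<1/n$, and choose $\gamma_n\in\Gamma$ with $\sup_{t\in[0,1]}\tilde{J}_{d,\theta_n}(\gamma_n(t))\leq \beta_{\theta_n}+(\theta-\theta_n)$. Using the identity
$$\tilde{J}_{d,\theta_n}(u)=\tilde{J}_{d,\theta}(u)+(\theta-\theta_n)B(u),$$
together with $\beta_{\theta_n}\geq\beta_\theta$ and the differentiability $\beta'_\theta=\lim(\beta_{\theta_n}-\beta_\theta)/(\theta_n-\theta)$, a short computation at any point $t_n$ where $\tilde{J}_{d,\theta_n}(\gamma_n(t_n))$ is close to the supremum yields uniform bounds
$$B(\gamma_n(t_n))\leq -\beta'_\theta+2,\qquad \tilde{J}_{d,\theta}(\gamma_n(t_n))\to\beta_\theta.$$
Writing $A(\gamma_n(t_n))=\tilde{J}_{d,\theta}(\gamma_n(t_n))+\theta B(\gamma_n(t_n))$, we also get $A(\gamma_n(t_n))\leq C$. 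The coercivity assumption ($A\to\infty$ or $B\to\infty$ as $\|u\|_{X_0}\to\infty$) then gives $\|\gamma_n(t_n)\|_{X_0}\leq C$.

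With this bounded sequence of almost-maxima in hand, the last step is to extract a genuine bounded Palais--Smale sequence on the manifold $S_d^+$ at level $\beta_\theta$. Using the mountain pass geometry $\beta_\theta>\max\{\tilde{J}_{d,\theta}(w_1),\tilde{J}_{d,\theta}(w_2)\}$, I would apply a quantitative deformation/Ekeland argument on $S_d^+$: if no such sequence existed in an $X_0$-neighborhood of the paths $\gamma_n$, one could use the negative gradient flow restricted to $S_d^+$ (with a suitable cut-off away from the endpoints and outside the region of bounded norm) to deform each $\gamma_n$ into a path whose supremum is strictly below $\beta_{\theta_n}$, contradicting the definition of $\beta_{\theta_n}$. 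The resulting sequence $\{w_n\}\subset S_d^+$ satisfies conclusions \textit{(i)}--\textit{(iii)}.

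The main obstacle is the deformation step on the manifold $S_d^+$: because $S_d^+$ is defined through the positive part $u^+$, its tangent space and the associated Lagrange multiplier formula are slightly more delicate than for a standard sphere constraint. However, since critical points of $\tilde{J}_d|_{S_d^+}$ are automatically non-negative by the argument at the end of Section~\ref{Sa2}, in a neighborhood of the bounded almost-maxima the manifold $S_d^+$ behaves like the usual $L^2$-sphere, and the standard deformation lemma (as in \cite{jeanjean1999existence}) applies with only notational changes. The coercivity hypothesis is used exclusively to convert the bound on $B(\gamma_n(t_n))$ and on the energy into an $X_0$-bound, so it enters in an essential way only in the second paragraph above.
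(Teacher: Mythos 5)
The paper does not actually prove Proposition \ref{B6}: it is stated as a black-box result quoted from Jeanjean's paper \cite{jeanjean1999existence}, so there is no in-paper argument to compare yours against. Your sketch is a faithful reconstruction of Jeanjean's original proof and the logic is sound: monotonicity of $\theta\mapsto\beta_\theta$ from $B\ge 0$, almost-everywhere differentiability by Lebesgue's theorem, the estimate $B(\gamma_n(t_n))\le -\beta_\theta'+2+o(1)$ at near-maximal points of almost-optimal paths $\gamma_n$ for $\beta_{\theta_n}$ with $\theta_n\nearrow\theta$, the resulting bound on $A$ and hence on $\|\gamma_n(t_n)\|_{X_0}$ via the coercivity hypothesis, and finally a constrained deformation argument localized near this bounded set. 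Two small remarks. First, the contradiction at the end of the deformation step is with the definition of $\beta_\theta$ (the deformed path lies in $\Gamma$ and has $\sup_t\tilde{J}_{d,\theta}<\beta_\theta$), not with that of $\beta_{\theta_n}$; this is a slip of wording, not of substance. Second, you are right to flag the constraint $S_d^+=\{u:\|u^+\|_2^2=d\}$ as the delicate point: the map $u\mapsto\|u^+\|_2^2$ is $C^1$ with derivative $h\mapsto 2\int u^+h$, which is nonzero on $S_d^+$ since $d>0$, so $S_d^+$ is a genuine $C^1$ codimension-one submanifold and the constrained minimax machinery applies; your justification via the behavior of critical points is a reasonable way to say this, though the cleaner statement is simply that the constraint gradient never vanishes on $S_d^+$.
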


\begin{prop}\label{B7}
	 Let $d$ satisfy any one of the conditions \eqref{M1},\eqref{M2},\eqref{M3} according to $\alpha$. Also, assume that $N\in(2s,6s)$ for conditions $\mathcal{A}_2$ and $\mathcal{A}_3$. Then, for almost every $\theta\in[1-\epsilon,1]$, there exists a critical point $u_{\theta}$ of $\tilde{J}_{d,\theta}$ constrained on $S_d^+$ at the level $\beta_{\theta},$ which solves
	\begin{equation}\label{b13}   
	\left\{  \begin{array}{cc}
	\displaystyle  (-\Delta)^s u_{\theta} = \lambda_{\theta} u_{\theta} +\alpha|u_{\theta}|^{p-2}u+ \theta\left( \int\limits_{\Omega} \frac{|u_{\theta}(y)|^{2^{*}_{\mu ,s}}}{|x-y|^ \mu}\, \dy\right)  |u_{\theta}|^{2^{*}_{\mu ,s}-2}u_{\theta}\; \text{in} \; \Omega,\\
	u_{\theta}>0\; \text{in}\; \Omega,\;  u_{\theta} = 0\; \text{in} \; \mathbb{R}^{N}\backslash\Omega, \\ 
	\end{array} \right.
	\end{equation} 
	for $\alpha\le0$, and
	\begin{equation}\label{b14}  
	\left\{  \begin{array}{cc}
	\displaystyle  (-\Delta)^s u_{\theta} = \lambda_{\theta} u_{\theta} +\alpha\theta|u_{\theta}|^{p-2}u+ \theta\left( \int\limits_{\Omega} \frac{|u_{\theta}(y)|^{2^{*}_{\mu ,s}}}{|x-y|^ \mu}\, \dy\right)  |u_{\theta}|^{2^{*}_{\mu ,s}-2}u_{\theta}\; \text{in} \; \Omega,\\
	u_{\theta}>0\; \text{in}\; \Omega,\;  u_{\theta} = 0\; \text{in} \; \mathbb{R}^{N}\backslash\Omega, \\ 
	\end{array} \right.
	\end{equation} 
for $\alpha>0$, and for some $\lambda_{\theta}$. Additionally, $u_{\theta}\in \mathcal{T}_{\theta}.$
	
\end{prop}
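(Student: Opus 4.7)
The plan is to feed the family $(\tilde{J}_{d,\theta})_{\theta \in [1-\epsilon,1]}$ into Proposition \ref{B6}, extract a bounded Palais--Smale sequence at the mountain pass level $\beta_\theta$ for almost every $\theta$, identify a weak limit $u_\theta$ solving the Euler--Lagrange equation with Lagrange multiplier $\lambda_\theta$, and finally upgrade weak to strong convergence via a concentration--compactness argument whose critical input is an energy estimate on $\beta_\theta$ below the first non-compactness threshold of the Choquard term.

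First I verify the hypotheses of Proposition \ref{B6}: for $\alpha > 0$ take $A(u) = \tfrac{1}{2}\|u\|_{X_0}^2$ and $B(u) = \tfrac{\alpha}{p}\|u^+\|_p^p + \tfrac{1}{2\cdot 2^*_{\mu,s}}\|u^+\|_{NL}^{2\cdot 2^*_{\mu,s}} \geq 0$; for $\alpha \leq 0$ take $A(u) = \tfrac{1}{2}\|u\|_{X_0}^2 - \tfrac{\alpha}{p}\|u^+\|_p^p \geq \tfrac{1}{2}\|u\|_{X_0}^2$ and $B(u) = \tfrac{1}{2\cdot 2^*_{\mu,s}}\|u^+\|_{NL}^{2\cdot 2^*_{\mu,s}} \geq 0$; in both cases $A(u) \to \infty$ as $\|u\|_{X_0} \to \infty$. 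The uniform mountain pass geometry is exactly Lemma \ref{B5}. Hence for a.e. $\theta \in [1-\epsilon,1]$ there is a bounded sequence $\{u_n\} \subset S_d^+$ with $\tilde{J}_{d,\theta}(u_n) \to \beta_\theta$ and $(\tilde{J}_{d,\theta}|_{S_d^+})'(u_n) \to 0$ in $X_0'$. Passing to a subsequence, $u_n \rightharpoonup u_\theta$ in $X_0$, strongly in $L^r(\Omega)$ for $r < 2^*_s$ and a.e. The Lagrange multiplier rule gives $\lambda_n \in \R$ with $\tilde{J}_{d,\theta}'(u_n) - \lambda_n u_n^+ \to 0$; choosing $\zeta \in X_0$ with $\int_\Omega u_\theta^+ \zeta\,\dx \neq 0$ and arguing as in Theorem \ref{B11} shows $\lambda_n \to \lambda_\theta$. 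Weak convergence in $X_0$, strong $L^p$ convergence, and weak continuity of the doubly nonlocal term in $L^{2N/(N+2s)}$ then imply that $u_\theta$ is a weak solution of \eqref{b13} (if $\alpha \leq 0$) or \eqref{b14} (if $\alpha > 0$).

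The main obstacle is to prove $u_n \to u_\theta$ strongly in $X_0$, so that $\tilde{J}_{d,\theta}(u_\theta) = \beta_\theta$. Setting $w_n := u_n - u_\theta$, the Brezis--Lieb lemma applied to both $\|\cdot\|_{X_0}^2$ and to the HLS nonlinearity, together with the derivative identities $\tilde{J}_{d,\theta}'(u_n)[u_n] = \lambda_n d + o_n(1)$ and $\tilde{J}_{d,\theta}'(u_\theta)[u_\theta] = \lambda_\theta d$, yields
\begin{equation*}
\|w_n\|_{X_0}^2 - \theta \|w_n^+\|_{NL}^{2 \cdot 2^*_{\mu,s}} \to 0, \qquad \beta_\theta = \tilde{J}_{d,\theta}(u_\theta) + \tfrac{2^*_{\mu,s} - 1}{2\cdot 2^*_{\mu,s}}\lim_n \|w_n\|_{X_0}^2.
\end{equation*}
Using the sharp HLS bound $\|w_n^+\|_{NL}^{2\cdot 2^*_{\mu,s}} \leq S_{HL}^{-2^*_{\mu,s}} \|w_n\|_{X_0}^{2 \cdot 2^*_{\mu,s}}$, either $\|w_n\|_{X_0} \to 0$ or $\lim_n \|w_n\|_{X_0}^2 \geq (S_{HL}^{2^*_{\mu,s}}/\theta)^{1/(2^*_{\mu,s} - 1)}$. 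To rule out the latter I construct an explicit mountain pass path joining $u_d$ to a far endpoint by concentrating a rescaled extremal $U_\varepsilon$ from \eqref{b35}, and perform the sharp asymptotic expansions of $\|U_\varepsilon\|_{X_0}^2$, $\|U_\varepsilon\|_{NL}^{2\cdot 2^*_{\mu,s}}$ and $\|U_\varepsilon\|_p^p$ as $\varepsilon \to 0$ to produce the strict energy estimate
\begin{equation*}
\beta_\theta < \tilde{J}_{d,\theta}(u_\theta) + \tfrac{2^*_{\mu,s}-1}{2\cdot 2^*_{\mu,s}}\bigl(S_{HL}^{2^*_{\mu,s}}/\theta\bigr)^{1/(2^*_{\mu,s}-1)},
\end{equation*}
which forces $\|w_n\|_{X_0} \to 0$. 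The hypothesis $N \in (2s,6s)$ in cases $(\mathcal{A}_2),(\mathcal{A}_3)$ enters precisely where $\|U_\varepsilon\|_p^p$ has to dominate the residual term in the $S_{HL}$ expansion, in direct analogy with the classical Brezis--Nirenberg threshold analysis.

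Once strong convergence is secured, $u_\theta \in S_d^+$ solves \eqref{b13}/\eqref{b14} at energy $\beta_\theta$. Testing against $u_\theta^-$ gives $\|u_\theta^-\|_{X_0} = 0$, so $u_\theta \geq 0$; the strong maximum principle invoked in the proof of Theorem \ref{B11} then upgrades this to $u_\theta > 0$. Finally, the Pohozaev identity of Lemma \ref{B10} adapted to \eqref{b13}/\eqref{b14}, combined with the positivity of the boundary term $\int_\Omega (\partial_\rho u/\delta^s)^2 (x\cdot\rho)\, d\sigma$ on a star-shaped $\Omega$, yields the strict inequality defining $\mathcal{T}_\theta$, so $u_\theta \in \mathcal{T}_\theta$, completing the proof.
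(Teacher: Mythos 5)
Your proposal follows the paper's proof essentially verbatim: the same choice of $A$ and $B$ feeding into the monotonicity trick (Proposition \ref{B6}), the same Lagrange-multiplier identification and Brezis--Lieb splitting $\|w_n\|_{X_0}^2 = \theta\|w_n\|_{NL}^{2\2} + o_n(1)$ with the non-compactness threshold $\bigl(S_{HL}^{\2}/\theta\bigr)^{1/(\2-1)}$, and the same conclusion via testing with $u_\theta^-$ and the maximum principle. The only cosmetic differences are that you inline the strict energy estimate (the paper defers it to Proposition \ref{B9} and combines it with Lemma \ref{B4} and Lemma \ref{B5} to extract a $\theta$-uniform gap $\eta>0$) and that you make the Pohozaev argument for $u_\theta\in\mathcal{T}_\theta$ explicit where the paper leaves it implicit.
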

\begin{proof}
Let 
\begin{equation*}
A(u)=\left\{  \begin{array}{cc}
 \frac{1}{2}\|u\|_{X_0}^2  &\text{if} \; \alpha>0,\\
\frac{1}{2}\|u\|_{X_0}^2-\frac{\alpha}{p}\|u^+\|_p^p&\text{if}~\alpha\le0, 
\end{array} \right.
\end{equation*}
 \begin{equation*}
B(u)=\left\{  \begin{array}{cc}
 \|u^+\|^{2\2}_{NL}+\frac{\alpha}{p}\|u^+\|_p^p  &\text{if} \; \alpha>0,\\
\|u^+\|^{2\2}_{NL}&\text{if}~\alpha\le0. 
\end{array} \right.
\end{equation*}
Then using the Proposition \ref{B6},  for all  $\theta\in [1-\epsilon,1]$, there exists a bounded P.S. sequence $\{u_n\}\subset S_d^+$, satisfying $\tilde{J}_{d,\theta}(u_n)\to\beta_{\theta}$ and $(\tilde{J}_{d,\theta}|_{S_d^+})'(u_n)\to 0$ as $n\to\infty$. It implies that up to a 
 \begin{align*}
 u_n&\rightharpoonup u_{\theta}~~~\text{weakly in}~X_0,\\
 u_n&\to u_{\theta}~~\text{strongly}~\text{in}~L^{p}(\Om)~\text{for}~2\le p<\2,\\
(|x|^{-\mu}*|u_n|^{\2})|u_n|^{\2}&\rightharpoonup (|x|^{-\mu}*|u_{\theta}|^{\2})|u_{\theta}|^{\2}~~\text{weakly}
~\text{in}~ L^{\frac{2N}{N+2s}}(\Om),\\
 u_n&\to u_{\theta}~~~\text{a.e. in}~\Omega. 
  \end{align*}
 One can easily verify that  $u_{\theta}\in S_d^+$ is a critical point of $\tilde{J}_{d,\theta}{|_{S_d^+}}$. Define $w_n=u_n-u_{\theta}$. Since $(\tilde{J}_{d,\theta}|_{S_d^+})'(u_n)\to 0$, it follows that there exists $\lambda_n$ such that $(\tilde{J}_{d,\theta})'(u_n)-\lambda_nu_n^+\to0$. Let $\lambda_{\theta}$ be a Lagrange multiplier corresponding to $u_{\theta}.$ Using the same arguments as in  the theorem \ref{B11}, we have 
$$\|w_n\|_{X_0}^2=\theta\|w_n\|^{22^*_{\mu,s}}_{NL}+o_n(1).$$
Assuming that $\|w_n\|_{X_0}^2\to \theta l\ge0 $ and $\|w_n\|^{22^*_{\mu,s}}_{NL}\to l\ge0,$ we can use the definition of $S_{HL}$ to obtain
  $$ \|w_n\|_{X_0}^2\ge S_{HL}\left(\|w_n\|^{22^*_{\mu,s}}_{NL} \right)^{\frac{1}{\2}}.$$
  This implies that $\theta l>S_{HL}l^{\frac{1}{\2}}$. We claim that $l=0$. Suppose if possible $l>0$, then $l\ge \left(\frac{S_{HL}}{\theta}\right)^{\frac{\2}{\2-1}}$, consequently we get 
  $$\tilde{J}_{d,\theta}(w_n)\ge \left(\frac{\2-1}{\2} \right)\left(\frac{S_{HL}^{\2}}{\theta} \right) ^{\frac{1}{\2-1}}+o_n(1).$$
  Now using the Brezis-Lieb Lemma, we have
  \begin{align}\label{b40}
  \notag \tilde{J}_{d,\theta}(u_n)&= \tilde{J}_{d,\theta}(u_{\theta})+ \tilde{J}_{d,\theta}(w_n)+o_n(1)\\
  &\ge m_{d,\theta}+\left(\frac{\2-1}{\2} \right)\left(\frac{S_{HL}^{\2}}{\theta} \right) ^{\frac{1}{\2-1}}+o_n(1).
  \end{align}
  By means of  Proposition \ref{B6} \textit{(iii)}, \ref{B9}
  and Lemma \ref{B5} \textit{(ii)}, we conclude
  \begin{equation*}
      \beta_{\theta}=\beta_1+o_{\theta}(1)< \tilde{J}_{d}+\frac{\2-1}{2\2}S_{HL}^{\frac{\2}{\2-1}}=m_{d,\theta}+\left(\frac{\2-1}{\2} \right)\left(\frac{S_{HL}^{\2}}{\theta} \right) ^{\frac{1}{\2-1}}+o(1).
  \end{equation*}
It gives us that for $\theta$ is close to $1^-$ for all $\theta\in (1-\e,1)$ ($\e$ small enough), we have 
\begin{equation}\label{b36}
    \beta_{\theta}+\eta<m_{d,\theta}+\left(\frac{\2-1}{\2} \right)\left(\frac{S_{HL}^{\2}}{\theta} \right) ^{\frac{1}{\2-1}},
\end{equation}
  for some $\eta>0 $ independent of $\theta$. Resuming the information from \eqref{b40}, \eqref{b36} and the fact that $\lim\limits_{n\to\infty}\tilde{J}_{d,\theta}(u_n)=\beta_{\theta},$ we deduce  a self-contradictory inequality
  $$m_{d,\theta}+\left(\frac{\2-1}{\2} \right)\left(\frac{S_{HL}^{\2}}{\theta} \right) ^{\frac{1}{\2-1}}\le\beta_{\theta} <\beta_{\theta}+\eta<m_{d,\theta}+\left(\frac{\2-1}{\2} \right)\left(\frac{S_{HL}^{\2}}{\theta} \right) ^{\frac{1}{\2-1}}.$$
  Thus, $l=0$, which proves our claim. Therefore, $u_n\to u_d$ strongly in $X_0$. Hence, $u_{\theta}$ is a critical point of $\tilde{J}_{d,\theta}$ 
on the constrained $S_d^+$ at level $\beta_{\theta}$.\par 
  By Lagrange multiplier principle, we know that $u_{\theta}$ satisfies
  \begin{equation*}
  (-\Delta)^s u_{\theta} = \lambda_{\theta} u^+_{\theta} +\alpha|u^+_{\theta}|^{p-2}u^+_{\theta}+ \theta\left( \int\limits_{\Omega} \frac{|u^+_{\theta}(y)|^{2^{*}_{\mu ,s}}}{|x-y|^ \mu}\, \dy\right)  |u^+_{\theta}|^{2^{*}_{\mu ,s}-2}u^+_{\theta}\;~ \text{in} \; \Omega,
  \end{equation*} 
  for $\alpha\le0$, and
  
   \begin{equation*}
  (-\Delta)^s u_{\theta} = \lambda_{\theta} u^+_{\theta} +\alpha\theta|u^+_{\theta}|^{p-2}u^+_{\theta}+ \theta\left( \int\limits_{\Omega} \frac{|u^+_{\theta}(y)|^{2^{*}_{\mu ,s}}}{|x-y|^ \mu}\, \dy\right)  |u^+_{\theta}|^{2^{*}_{\mu ,s}-2}u^+_{\theta}\;~ \text{in} \; \Omega,
  \end{equation*} 
  for $\alpha>0.$ Next to show that $u_{\theta} $ is positive, multiplying above equations by $u_{\theta}$ and integrating on $\Omega$, we obtain that 
  $$\|u_{\theta}^-\|_{X_0}^2=0,$$ 
  which implies that $u_{\theta}\ge0$. By using maximum principle \cite{giacomoni2020regularity, cheng2017maximum}, we have $u_{\theta}>0$ and solves \eqref{b13} for $\alpha\le0$ (\eqref{b14} for $\alpha>0$). 
\QED
\end{proof} 
\begin{prop}\label{B3}
	Let $d$ satisfies \eqref{M1},\eqref{M2},\eqref{M3} depending on $\alpha$ and $p$. Then there exists a sequence $\{u_n\}\subset S_d^+$ which is bounded in $X_0$ such that $\lim\limits_{n\to\infty}\tilde{J}_d(u_n)=\beta(d)$ and $(\tilde{J}_d|_{S_d})'(u_n)\to 0$ as $n\to\infty$. 
\end{prop}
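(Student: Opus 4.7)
The strategy is to pass to the limit in the perturbed family $\tilde{J}_{d,\theta}$ introduced in Section \ref{Sa3}. By Proposition \ref{B7}, for almost every $\theta$ in a left-neighborhood $(1-\epsilon,1]$ of $1$ there exists a positive critical point $u_\theta$ of $\tilde{J}_{d,\theta}|_{S_d^+}$, lying in $\mathcal{T}_\theta$ at the mountain-pass level $\beta_\theta$. I would pick a sequence $\theta_n\uparrow 1$ of such regular values and set $u_n:=u_{\theta_n}$. The claim is that $\{u_n\}$ is the desired bounded Palais-Smale sequence for $\tilde{J}_d$ at the level $\beta(d)=\beta_1$.

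For the boundedness I would use the coercivity estimates already obtained in \eqref{b10}, \eqref{b11} for elements of $\mathcal{T}_\theta$. Provided $\theta_n$ is close enough to $1$ so that $2\cdot 2^*_{\mu,s}\,\theta_n>p\delta_p$ in cases $\mathcal{A}_1,\mathcal{A}_3$ and $p\delta_p\,\theta_n>2$ in case $\mathcal{A}_2$, these yield a uniform bound of the form $\tilde{J}_{d,\theta_n}(u_n)\ge c\|u_n\|_{X_0}^2$ with $c>0$ independent of $n$. Combined with the identity $\tilde{J}_{d,\theta_n}(u_n)=\beta_{\theta_n}$ and the continuity $\beta_{\theta_n}\to\beta_1$ proved in Lemma \ref{B5}(ii), this yields $\sup_n\|u_n\|_{X_0}<\infty$.

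Once boundedness is secured, the remaining convergences follow by writing the discrepancy between $\tilde{J}_d$ and $\tilde{J}_{d,\theta_n}$ (and between their derivatives) as $(1-\theta_n)$ times quantities controlled by $\|u_n^+\|_{NL}^{2\cdot 2^*_{\mu,s}}$ and, in case $\alpha>0$, by $\|u_n^+\|_p^p$. The Hardy-Littlewood-Sobolev inequality of Proposition \ref{hls}, the Sobolev embedding into $L^{2^*_s}$, and the Gagliardo-Nirenberg inequality \eqref{b31}, combined with the boundedness of $\{u_n\}$ in $X_0$, imply $\tilde{J}_d(u_n)-\tilde{J}_{d,\theta_n}(u_n)\to 0$ and $\tilde{J}_d'(u_n)-\tilde{J}_{d,\theta_n}'(u_n)\to 0$ in $X_0'$. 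Since $u_n$ is critical for $\tilde{J}_{d,\theta_n}|_{S_d^+}$, the Lagrange multiplier rule together with a standard tangential-projection argument on the manifold $S_d^+$ then yields $(\tilde{J}_d|_{S_d^+})'(u_n)\to 0$, while $\tilde{J}_d(u_n)\to\beta_1=\beta(d)$ follows from the energy comparison and Lemma \ref{B5}(ii).

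The main obstacle is the uniform coercivity underlying the boundedness step: the constant in the inequalities \eqref{b10}-\eqref{b11} depends on $\theta$, and one must guarantee that it stays bounded away from $0$ along the chosen sequence $\theta_n\uparrow 1$. This forces a careful choice of the parameter $\epsilon$ from Lemma \ref{B5} so that both the energy-gap condition separating $\beta_\theta$ from $\tilde{J}_{d,\theta}(u_d)$ and the positivity of the coercivity constants hold simultaneously on $(1-\epsilon,1]$. Beyond this point the argument is a routine linearization of the Lagrange-multiplier identity around $\theta=1$, together with the Brezis-Lieb-type compactness already exploited in the proof of Proposition \ref{B7}.
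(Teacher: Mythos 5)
Your proposal is correct and follows essentially the same route as the paper: take $\theta_n\uparrow 1$, set $u_n=u_{\theta_n}$ from Proposition \ref{B7}, obtain boundedness from the coercivity of $\tilde{J}_{d,\theta_n}$ on $\mathcal{T}_{\theta_n}$ together with $\beta_{\theta_n}\to\beta_1$ (Lemma \ref{B5}(ii)), and pass the energy level and the derivative to the limit via the $(1-\theta_n)$ discrepancy. The only cosmetic difference is that the "obstacle" you flag is harmless, since the coercivity constants $\left(\tfrac12-\tfrac{1}{2\cdot 2^*_{\mu,s}\theta}\right)$ and $\left(\tfrac12-\tfrac{1}{p\delta_p\theta}\right)$ are increasing in $\theta$ and hence uniformly positive near $\theta=1$.
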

\begin{proof}
	Using Proposition \ref{B7}, we can take $\theta_n\to 1^-$ and $u_n=u_{\theta_n}$, which solves \eqref{b13} or  \eqref{b14} depending  on $\al$. We aim to show that $\{u_n\}$ is bounded in $X_0$. Using  Lemma \ref{B7} \textit{(ii)}  for  $n$ large enough such that $\theta_n$ is close to $1^-$ then, we have $\beta_{\theta_n}\le 2\beta_{1}$.\par 
	For $\alpha\le 0$, $u_{\theta_n}\in \mathcal{T}_{\theta_n}$, we have for large $n$ such that $p\delta_p<2\2\theta_n$,
	\begin{align*}
	2\beta_{1}\ge \beta_{\theta_n}=\tilde{J}_{d,\theta_n}(u_n)&>\left(\frac{1}{2}-\frac{1}{2\2\theta_n} \right)\|u_n\|^2_{X_0}+\alpha\left(\frac{\delta_p}{2\2\theta_n}-\frac{1}{p}\right)\|u_n^+\|_p^p\\
	&\ge \left(\frac{1}{2}-\frac{1}{2\2\theta_n} \right)\|u_n\|_{X_0}^2,
	\end{align*}
	which shows boundedness of $\{u_n\}$ in $X_0$. For $\alpha>0$ and $2+\frac{4s}{N}<p<2^*_s$, again using the fact that $u_{\theta_n}\in \mathcal{T}_{\theta_n}$, we have for large $n$ such that $p\delta_p\theta_n>2$
	\begin{equation*}
	2\beta_{1}\ge\beta_{\theta_n}=\tilde{J}_{d,\theta_n}(u_n)>\left(\frac{1}{2}-\frac{1}{p\delta_p\theta_n}\right)\|u_n\|^2_{X_0},
	\end{equation*}
	which shows boundedness of $\{u_n\}$ in $X_0$.\par 
	As $\{u_n\}$ is bounded $X_0$ and $\tilde{J}_{d,\theta_n}=\beta_{\theta_n}$, by Lemma \ref{B5} \textit{(ii)}, we get
	\begin{equation*}
	\lim\limits_{n\to\infty}\tilde{J}_d(u_n)=\lim\limits_{n\to\infty}\tilde{J}_{d,\theta_n}(u_n)=\lim\limits_{n\to\infty}\beta_{\theta_n}=\beta_{1}
	\end{equation*}
	As $\beta_{1}=\beta(d)$ and $\{u_n\}$ is $X_0$ bounded, so we have 
	\begin{equation*}
	(\tilde{J}_d|_{S_d^+})'(u_n)=(\tilde{J}_{d,\theta_n}|_{S_d^+})'(u_n)+o_n(1)\to0~~\text{as}~n\to\infty, 
	\end{equation*}
	which completes the proof.\QED
\end{proof}
\maketitle
\section{Existence of MP-Type solution}\label{Sa4}
In this section, we will prove the existence of a second solution using the uniform mountain pass theorem. We use the minimizer of the inequality (Proposition \ref{hls}) to conclude that the Palais-Smale condition holds at $\beta(d)$.
Let $\zeta\in C_0^{\infty}$ be a radial function, such that
\begin{equation*}
\zeta(x)\equiv
    \begin{cases}
        1~~~~&\text{for}~|x|\le~ R,\\
        0 &\text{for}~|x|\ge ~2R,
    \end{cases}
\end{equation*}
here $B_{2R} \subset \Omega$. Let $v_{\epsilon}=\zeta U_{\epsilon}, $ where $U_{\e}$ is define in \eqref{b35}.

\begin{lemma}\label{B23} Let $s\in(0,1)$ and $N>2s$. Then, as $\epsilon\to 0$, we have
	\begin{itemize}
		\item[(i)]
		 \begin{align*}
	\|v_{\epsilon}\|_{X_0}^2&\le S_{HL}^{\frac{\2}{\2-1}}.
    \end{align*}
	\item[(ii)]\begin{equation*}
	\|v_{\epsilon}\|_2^2=\begin{cases}
	C_s\epsilon^{2s}+O(\epsilon^{N-2s})&\text{if}~n>4s,\\
C_s\epsilon^{2s}|log\epsilon|+O(\epsilon^{2s})&\text{if}~n=4s,\\
	C_s\epsilon^{N-2s}+O(\epsilon^{2s})&\text{if}~n<4s.\\
	\end{cases}
	\end{equation*}
	where $C_s$ is some positive constant, depending on $s$.
    \item[(iii)] \begin{equation*}
        \|v_{\epsilon}\|_p^p=\begin{cases}
            O(\epsilon^{N-\frac{N-2s}{2}p}) &\text{if}~N>\frac{p}{p-1}2s,\\
            O(\epsilon^{\frac{N}{2}}|\log\epsilon|)&\text{if}~N=\frac{p}{p-1}2s,\\
            O(\epsilon^{\frac{N-2s}{2}p})&\text{if}~N<\frac{p}{p-1}2s.
        \end{cases}
    \end{equation*}
    \item[(iv)] \begin{equation*}
        \|v_{\e}\|_{NL}^{2\2}\le S_{HL}^{\frac{2\2}{\2-2}}+O(\epsilon^N).
    \end{equation*}
    \begin{equation*}
        \|v_{\e}\|_{NL}^{2\2}\ge S_{HL}^{\frac{2\2}{\2-2}}-O(\epsilon^N).
    \end{equation*}
		\item[(v)]\begin{align*}
\|u_d+kv_{\epsilon}\|_{NL}^{2\2}\ge\|u_d\|_{NL}^{2\2}+\|kv_{\epsilon}\|_{NL}^{2\2}+\hat{A}k^{2\2-1}\int_{\Omega}\int_{\Omega}\frac{(v_{\epsilon}(x))^{\2}(v_{\epsilon}(y))^{\2-1}u_d}{|x-y|^{\mu}}\dx\dy\\+2\2k\int_{\Omega}\int_{\Omega}\frac{(u_{d}(x))^{\2}(u_d(y))^{\2-1}v_{\epsilon}(x)}{|x-y|^{\mu}}\dx\dy-o(\epsilon^{\frac{N-2s}{2}}).
        \end{align*}
\end{itemize}
\end{lemma}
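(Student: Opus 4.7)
My plan for Lemma \ref{B23} is to view $v_\e = \zeta U_\e$ as a truncated Aubin-Talenti-type bubble and to reduce each of the five estimates to an asymptotic expansion in $\e$ via the rescaling $y = x/\e$, which converts $U_\e$ into the fixed profile $u^*$. The dominant order in every estimate is then controlled by the decay $u^*(y) \sim |y|^{-(N-2s)}$ at infinity and by the prefactor $\e^{-(N-2s)/2}$ built into $U_\e$. For (i) I will combine the extremiser identity $[U_\e]_s^2 = S_{HL}^{\2/(\2-1)}$, obtained by testing $(-\De)^s U_\e = (|x|^{-\mu} * |U_\e|^{\2})|U_\e|^{\2-2}U_\e$ against $U_\e$ and invoking the definition of $S_{HL}$, with the classical truncation bound $\|v_\e\|_{X_0}^2 \le [U_\e]_s^2 + O(\e^{N-2s})$, and absorb the lower-order term into the inequality. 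For (ii) and (iii), after rescaling, $\|v_\e\|_r^r$ becomes an $\e$-power times $\int_{\R^N}(u^*)^r$, whose integrability at infinity depends on whether $r(N-2s)$ exceeds, equals, or is less than $N$; the contribution from the annulus $R \le |x| \le 2R$ then provides the dominant term in the divergent regime and the logarithmic correction in the boundary case, yielding the three cases with $r=2$ in (ii) and $r=p$ in (iii).

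Part (iv) proceeds analogously. Writing $|v_\e|^{\2} = |U_\e|^{\2} - (1 - \zeta^{\2})|U_\e|^{\2}$, the correction to the double integral is supported where $|x| \ge R$ or $|y| \ge R$; on this region $|U_\e|^{\2} \lesssim \e^{N} |x|^{-2N}$, and after convolving against the Riesz kernel $|x-y|^{-\mu}$ the total error is of order $\e^N$. Combining with the extremiser identity $\|U_\e\|_{NL}^{2\2} = S_{HL}^{\2/(\2-1)}$ (again from testing the equation against $U_\e$) yields both the upper and lower bounds simultaneously. Part (v) is purely algebraic: the pointwise inequality $(a+b)^{\2} \ge a^{\2} + b^{\2} + \2\, a^{\2-1}b + \2\, a b^{\2-1}$, valid for $a,b \ge 0$ and $\2 \ge 2$, applied at $x$ and at $y$ with $a = u_d$ and $b = k v_\e$, expands the integrand $|u_d + kv_\e|^{\2}(x)\,|u_d + kv_\e|^{\2}(y)$ into pure, main cross, and higher-order mixed terms. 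Integrating against $|x-y|^{-\mu}$, the two pure terms yield $\|u_d\|_{NL}^{2\2}$ and $\|kv_\e\|_{NL}^{2\2}$, two dominant cross pairings (combined via the $x \leftrightarrow y$ symmetry of the Riesz kernel) reproduce the two integrals in the statement, and the remaining mixed products involving higher powers of $v_\e$ are controlled using the bounds from (iii) together with $u_d \in L^\infty(\Om)$, which follows from standard regularity theory applied to the solution produced in Theorem \ref{B11}; this yields the remainder $o(\e^{(N-2s)/2})$.

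The main technical obstacle is part (i): unlike the local Dirichlet energy, the $X_0$-seminorm is a double integral that entangles near and far regions nonlocally, so the truncation error does not decompose cleanly by the support of $1-\zeta$. I would handle it by splitting the $(x,y)$-integration into the zones $\{|x|, |y| \le R\}$, $\{|x| \le R \le |y| \le 2R\}$ (and its symmetric counterpart), and $\{|x|, |y| \ge R\}$, and by estimating the mixed zone using a mean value bound on the rescaled profile $u^*$; the resulting correction is $O(\e^{N-2s})$, which is lower order than $S_{HL}^{\2/(\2-1)}$ and can be absorbed into the inequality. A similar care will be needed in the HLS expansion of (iv) to ensure that the tail of $|U_\e|^{\2}$, convolved against itself and against the bulk, contributes no worse than $\e^N$; any coarser bookkeeping would produce a remainder that is too large to close the mountain pass argument in Proposition \ref{B7}.
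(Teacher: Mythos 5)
Your outline for (i)--(iv) follows the standard bubble-truncation computations that the paper itself does not reproduce but simply imports from \cite{mukherjee2016fractional, servadei2015brezis, he2022normalized}, and it is broadly sound, with two caveats. In (iv) the pointwise tail bound should be $|U_\e|^{\2}(x)\lesssim \e^{(2N-\mu)/2}|x|^{-(2N-\mu)}$, not $\e^{N}|x|^{-2N}$ (you have substituted the local exponent $2^*_s$ for $\2$); the claimed $O(\e^N)$ error is still attainable, but only because the dominant cross term pairs this tail of order $\e^{(2N-\mu)/2}$ with the bulk mass $\int_{\R^N}|U_\e|^{\2}\sim \e^{\mu/2}$, so the bookkeeping has to be redone with the correct exponent. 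In (i), a truncation error of size $O(\e^{N-2s})$ cannot simply be ``absorbed'' into the inequality $\|v_\e\|_{X_0}^2\le S_{HL}^{\2/(\2-1)}$ unless you control its sign; as written that step is unjustified.

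The genuine gap is in (v). The pointwise inequality $(a+b)^{q}\ge a^{q}+b^{q}+qa^{q-1}b+qab^{q-1}$ holds for $q\ge 3$ but fails for $2\le q<3$ (already at $q=2$ the right-hand side exceeds $(a+b)^2$ by $2ab$), and it is not even of the right shape when $q<2$. Here $q=\2=(2N-\mu)/(N-2s)$ ranges over the whole interval $(1,2^*_s)$ as $\mu$ ranges over $(0,N)$; in particular $\2<2$ whenever $\mu>4s$, and $\2\ge 3$ only when $\mu\le 6s-N$, so your hypothesis ``$\2\ge 2$'' excludes admissible parameters and is in any case insufficient for the inequality you invoke. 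This is precisely why the statement of (v) carries an unspecified constant $\hat{A}$ on one cross term while only the term linear in $v_\e$ (the one that drives the energy comparison in Proposition \ref{B9}) retains the sharp coefficient $2\2$, and why the paper's own proof splits into the cases $\2>2$ and $1<\2\le 2$, appealing to the corrected superadditivity inequality of \cite{goel2019kirchhoff} (which replaces $q$ by a constant $C(q)$ on one cross term for intermediate exponents) together with the compact support of $v_\e$ in the low-exponent regime. To close the argument you must replace your single pointwise expansion by that two-case analysis, or otherwise restrict $\mu$.
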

\begin{proof}
    For \textit{(i) - (iv)} refer \cite{mukherjee2016fractional,servadei2015brezis,he2022normalized}.\par
    \textit{(v)} If $\2>2$, we can use \cite{goel2019kirchhoff}, if $1<\2\le2$ then we can apply the same assertion as in \cite{goel2019kirchhoff} and  the fact that $v_{\epsilon}$ has a compact support.
\end{proof}
\begin{lemma}\label{B22}\cite[Lemma 5.5]{song2024two}
    For any $\phi\in X_0$, we have
    \begin{itemize}
    \item[(i)] $\langle v_{\epsilon},\phi\rangle_{X_0}\le {B}_1R^2\epsilon^{\frac{N-2s}{2}}$,
    \item[(ii)] $\displaystyle\int_{\Omega}\int_{\Omega}\frac{(v_{\epsilon}(x))^{\2}(v_{\epsilon}(y))^{\2-1}\phi(y)}{|x-y|^{\mu}}\dx\dy\ge B_2\epsilon^{\frac{N-2s}{2}} .$
\end{itemize}
\end{lemma}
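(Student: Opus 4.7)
\textbf{Proof proposal for Lemma \ref{B22}.}

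The strategy in both parts is to exploit the concentration of $v_\epsilon=\zeta U_\epsilon$ near the origin together with the self-similar scaling $U_\epsilon(x)=\epsilon^{-(N-2s)/2}\tilde U(x/\epsilon)$ of the critical profile. The small factor $\epsilon^{(N-2s)/2}$ should come directly from this scaling, while the $R$-dependence will come from the support of the cutoff $\zeta$; the constants $B_1,B_2$ will depend on $\phi$ (e.g.\ on $\|(-\Delta)^s\phi\|_{L^\infty}$ on $B_{2R}$ and on a positive lower bound of $\phi$ near the origin, respectively).

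For part $(i)$, I would use the duality representation
\[
\langle v_\epsilon,\phi\rangle_{X_0}=\int_{\R^N} v_\epsilon(x)\,(-\Delta)^s\phi(x)\,\dx,
\]
obtained by integration by parts (extended by density when $\phi$ is only in $X_0$). Since $\mathrm{supp}(v_\epsilon)\subset B_{2R}\subset\Om$, Holder's inequality yields $|\langle v_\epsilon,\phi\rangle_{X_0}|\le C(\phi)\,\|v_\epsilon\|_{L^1(B_{2R})}$. The scaling $x=\epsilon y$ then produces
\[
\|v_\epsilon\|_{L^1}=\epsilon^{(N-2s)/2}\int_{|y|\le 2R/\epsilon}\zeta(\epsilon y)\,\tilde U(y)\,\dy\le C R^{2}\epsilon^{(N-2s)/2},
\]
after using the pointwise decay $\tilde U(y)\lesssim(1+|y|)^{-(N-2s)}$ and controlling the growth of the truncated integral in terms of $R$.

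For part $(ii)$, I would use the fact that $U_\epsilon$ satisfies the critical Choquard equation $(-\Delta)^sU_\epsilon=(|x|^{-\mu}\ast U_\epsilon^{\2})U_\epsilon^{\2-1}$ on $\R^N$ (Lemma \ref{lulem13}). Therefore, up to truncation errors introduced by $\zeta$,
\[
\int_{\Om}\int_{\Om}\frac{(v_{\epsilon}(x))^{\2}(v_{\epsilon}(y))^{\2-1}\phi(y)}{|x-y|^{\mu}}\,\dx\,\dy\ \approx\ \int_\Om \phi(y)\,(-\Delta)^sU_\epsilon(y)\,\dy,
\]
and the right-hand side has the same leading order $\epsilon^{(N-2s)/2}$ by the scaling computation from part $(i)$. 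The \emph{positive} lower bound $B_2\epsilon^{(N-2s)/2}$ then comes from the non-negativity of $v_\epsilon$ and the positivity of $\phi$ on a neighbourhood of the origin (in applications $\phi=u_d>0$ in $\Om$, by Theorem \ref{B11}).

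The main technical obstacle will be controlling the error produced by the cutoff $\zeta$. Because $(-\Delta)^s$ does not obey a Leibniz rule, the commutator $(-\Delta)^s(\zeta U_\epsilon)-\zeta(-\Delta)^s U_\epsilon$ is a genuinely nonlocal object that must be shown to contribute only $o(\epsilon^{(N-2s)/2})$ to both estimates. Similarly, for $(ii)$, the Riesz convolution $|x|^{-\mu}\ast v_\epsilon^{\2}$ picks up long-range contributions from the transition region where $\zeta<1$, and a careful splitting of $\Om$ into inner ($|x|\lesssim\epsilon$), intermediate ($\epsilon\lesssim|x|\lesssim R$), and outer ($R<|x|\le 2R$) zones --- each handled by direct integration or by Proposition \ref{hls} --- is needed to ensure the leading asymptotics survive. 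This mirrors the computation carried out in \cite[Lemma 5.5]{song2024two}, which can be adapted to the Choquard setting by invoking the critical Euler--Lagrange identity for $U_\epsilon$.
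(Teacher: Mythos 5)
The paper itself offers no proof of this lemma --- it is stated with only the citation to \cite[Lemma 5.5]{song2024two} --- so there is nothing internal to compare against; your proposal is the only actual argument on the table, and its overall strategy (scaling of $U_\epsilon$ for the $\epsilon$-power, the cutoff for the $R$-dependence, positivity of $\phi=u_d$ near the origin for the lower bound in (ii)) is the right one. You are also right to flag that the statement cannot hold literally ``for any $\phi\in X_0$'' with constants independent of $\phi$: part (i) needs $(-\Delta)^s\phi\in L^\infty(B_{2R})$, which for $\phi=u_d$ follows from the equation together with an $L^\infty$ bound on $u_d$ (not from $\phi\in X_0$ alone), and part (ii) needs $\phi$ bounded below by a positive constant near the origin.

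Two concrete corrections. First, your $L^1$ computation in (i) is internally inconsistent: with $v_\epsilon(x)=\zeta(x)\epsilon^{-(N-2s)/2}\tilde U(x/\epsilon)$ the substitution $x=\epsilon y$ produces the prefactor $\epsilon^{-(N-2s)/2}\cdot\epsilon^{N}=\epsilon^{(N+2s)/2}$, not $\epsilon^{(N-2s)/2}$, and the rescaled integral is \emph{not} $O(R^2)$ uniformly in $\epsilon$ --- since $\tilde U(y)\sim|y|^{-(N-2s)}$, one gets $\int_{|y|\le 2R/\epsilon}\tilde U\,\dy\sim (R/\epsilon)^{2s}$. The two errors cancel in the $\epsilon$-power, giving $\|v_\epsilon\|_{L^1}\lesssim R^{2s}\epsilon^{(N-2s)/2}$ (note $R^{2s}$, not $R^2$; the $R^2$ in the stated lemma is inherited from the local case $s=1$ of \cite{song2024two}), but as written the intermediate step is false and should be repaired. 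Second, for (ii) you do not need the commutator analysis or the inner/intermediate/outer splitting you describe: the integrand is nonnegative once $\phi\ge 0$, so for a \emph{lower} bound you simply restrict to $\{|x|,|y|\le R\}$ where $\zeta\equiv 1$, rescale to get the exact factor $\epsilon^{(N-2s)/2}$ times $\iint_{|z|,|w|\le R/\epsilon}|z-w|^{-\mu}U_1(z)^{2^*_{\mu,s}}U_1(w)^{2^*_{\mu,s}-1}\phi(\epsilon w)\,{\rm d}z\,{\rm d}w$, and bound $\phi(\epsilon w)\ge c>0$ on a fixed neighbourhood of the origin. The cutoff and the long-range Riesz contributions only matter for upper bounds, not for the one-sided estimate claimed here. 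With these repairs the argument goes through.
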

	\begin{prop}\label{B9}
	Considering the same assumption as in Theorem \ref{B11}, and for when $\alpha\ne0$, $N\in(2s,6s)$, then
	$$\beta(d)<m_d+\frac{\2-1}{2\2}S_{HL}^{\frac{\2}{\2-1}}.$$
    \end{prop}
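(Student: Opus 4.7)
The plan is to produce an explicit path $\gamma_\varepsilon \in \Gamma$ whose maximum energy lies strictly below $m_d + \frac{\2-1}{2\2} S_{HL}^{\2/(\2-1)}$. Since $\Gamma$ consists of continuous paths in $S_d^+$ from $u_d$ to the fixed point $v$ (outside $\mathcal{T}$, constructed in Lemma \ref{B5}), the natural choice is to move from $u_d$ in the direction of $v_\varepsilon$ (the truncated extremizer from Lemma \ref{B23} concentrating at the origin), after suitable $L^2$-normalization. Concretely, I would set
\[
\gamma_\varepsilon(t) = \sqrt{d}\,\frac{(u_d + tK v_\varepsilon)^+}{\|(u_d + tK v_\varepsilon)^+\|_2}, \qquad t \in [0,1],
\]
where $K>0$ is chosen large enough (uniformly in $\varepsilon$) so that $\tilde{J}_d(\gamma_\varepsilon(1)) < \tilde{J}_d(u_d) = m_d$ and $\gamma_\varepsilon(1) \notin \mathcal{T}$, making $\gamma_\varepsilon$ an admissible competitor for $\beta(d)$.

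The core computation is an expansion of $\tilde{J}_d(u_d + k v_\varepsilon)$ for $k \in [0,K]$, with subsequent renormalization. Using that $u_d$ is a critical point with Lagrange multiplier $\lambda_d$, the first-order terms in $k$ collapse to a $\lambda_d\!\int u_d v_\varepsilon$ contribution that the $L^2$-projection precisely absorbs up to $O(\|v_\varepsilon\|_2^2)$. Lemma \ref{B23}(v) yields the decomposition
\[
\|u_d+kv_\varepsilon\|_{NL}^{2\2} \geq \|u_d\|_{NL}^{2\2} + k^{2\2}\|v_\varepsilon\|_{NL}^{2\2} + 2\2\, k\!\int_\Omega\!\!\int_\Omega\!\frac{u_d(x)^{\2}u_d(y)^{\2-1}v_\varepsilon(x)}{|x-y|^{\mu}}\dx\dy - o(\varepsilon^{(N-2s)/2}),
\]
and the pure-$v_\varepsilon$ contribution reduces to $\frac{k^2}{2}\|v_\varepsilon\|_{X_0}^2 - \frac{k^{2\2}}{2\2}\|v_\varepsilon\|_{NL}^{2\2}$, whose maximum over $k$ equals $\frac{\2-1}{2\2}\bigl(\|v_\varepsilon\|_{X_0}^{2\2}/\|v_\varepsilon\|_{NL}^{2\2}\bigr)^{1/(\2-1)}$. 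By Lemma \ref{B23}(i) and (iv) this is at most $\frac{\2-1}{2\2}S_{HL}^{\2/(\2-1)} + O(\varepsilon^N)$.

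The strict inequality is then produced by the negative cross term $-2\2 k\!\int\!\!\int \cdots$, which by Lemma \ref{B22}(ii) contributes $-C_1\varepsilon^{(N-2s)/2}$ with $C_1>0$. Against this I must offset the positive error terms: from renormalization to $S_d^+$ ($\lambda_d\cdot O(\|v_\varepsilon\|_2^2)$), from the $p$-term $\frac{|\alpha|}{p}\|v_\varepsilon\|_p^p$ and cross term $|\alpha|\int u_d^{p-1}v_\varepsilon$ when $\alpha\neq 0$, and from the $O(\varepsilon^N)$ correction above. Lemma \ref{B23}(ii)--(iii) places these at orders $\varepsilon^{2s}$ (when $N>4s$), $\varepsilon^{2s}|\log\varepsilon|$ (when $N=4s$), or $\varepsilon^{N-2s}$ (when $N<4s$), plus mixed powers involving $p$. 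The hypothesis $N\in(2s,6s)$ is precisely what guarantees $(N-2s)/2<2s$ and $(N-2s)/2<N-2s$, so that $-C_1\varepsilon^{(N-2s)/2}$ dominates every positive error for $\varepsilon$ small.

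The principal obstacle is the exponent bookkeeping across the three dimensional regimes ($N<4s$, $N=4s$, $N>4s$) and across the assumptions $(\mathcal{A}_1), (\mathcal{A}_2), (\mathcal{A}_3)$: when $\alpha\neq 0$ the $p$-interaction term $\alpha\int u_d^{p-1}v_\varepsilon$ can be of order $\varepsilon^{(N-2s)/2}$ and threatens to cancel with the negative Choquard cross term, which is why the condition $N<6s$ is invoked only in these cases. A secondary technical point is verifying that the $L^2$-normalization, applied pointwise along the path, produces only an error compatible with $o(\varepsilon^{(N-2s)/2})$ in $\tilde{J}_d$, and that the endpoint $\gamma_\varepsilon(1)$ remains outside $\mathcal{T}$ uniformly in $\varepsilon$ small so the path legitimately competes in $\Gamma$.
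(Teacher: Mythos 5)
Your proposal follows essentially the same route as the paper: the test family $u_d+kv_\varepsilon$ renormalized onto $S_d^+$, the expansion of the Choquard term via Lemma \ref{B23}(v), the use of the weak equation for $u_d$ to absorb the first-order gradient cross term, the reduction of the pure-bubble part to $\frac{\2-1}{2\2}S_{HL}^{\2/(\2-1)}$, and the domination of all remaining positive errors by the negative cross term of Lemma \ref{B22}(ii), which is exactly where $N\in(2s,6s)$ enters. The one point to repair is the endpoint: $\Gamma$ has a \emph{fixed} terminal point $v$, so the path cannot simply stop at $\sqrt{d}\,(u_d+Kv_\varepsilon)^+/\|(u_d+Kv_\varepsilon)^+\|_2$; the paper instead renormalizes by the dilation $b^{(N-2s)/2}w_{\varepsilon,k}(bx)$ and then appends the fiber flow $\overline{W}_t=t^{N/2}W_{\varepsilon,\tilde k}(tx)$, along which the energy stays below the required level, decreases past $m_d$, and exits $\mathcal{T}$, so that the concatenated path is admissible in $\Gamma$ --- you should add this (or an equivalent low-energy connection to $v$) to your construction.
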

	\begin{proof}
		Let $w_{\epsilon,k}=u_d+kv_{\epsilon}$, for $k\ge0$. Then $w_{\epsilon,k}>0$ on $\Om$. Define
		$$W_{\epsilon,k}(x)=b^{\frac{N-2s}{2}}w_{\epsilon,k}(bx) .$$
		For $b=\left(\frac{\|w_{\epsilon,k}\|_2}{\sqrt{d}}\right)^{\frac{1}{s}}\ge 1$, we obtain $W_{\epsilon,k}\in X_0$, and $\|W_{\epsilon,k}\|_2^2=d$. Let $\overline{W}_t=t^{\frac{N}{2}}W_{\epsilon,\tilde{k}}(tx),$ for $t\ge1$, where $\tilde{k}$ is defined below. Set
		\begin{align*}
		\Phi(t)=\tilde{J}_d(\overline{W}_t)=\frac{t^{2s}}{2}\|W_{\epsilon,\tilde{k}}\|_{X_0}^2-\frac{\alpha t^{sp\delta_p}}{p}\|W_{\epsilon,\tilde{k}}\|_p^p-\frac{t^{s2\2}}{2.2^*_{\mu,s}}\|W_{\epsilon,\tilde{k}}\|_{NL}^{2\2},~~t\ge1.
		\end{align*}
		Then, we have
		\begin{align*}
		\Phi'(t)=st^{2s-1}\|W_{\epsilon,\tilde{k}}\|_{X_0}^2-\alpha s\delta_pt^{sp\delta_p-1}\|W_{\epsilon,\tilde{k}}\|_p^p-st^{2\2-1}\|W_{\epsilon,\tilde{k}}\|^{22^*_{\mu,s}}_{NL}.
		\end{align*} 
		By Lemma \ref{B23}, we derive
		\begin{align*}
\|W_{\epsilon,\tilde{k}}\|_{X_0}^2&=\|u_d\|_{X_0}^2+\tilde{k}^2S_{HL}^{\frac{\2}{\2-1}}+o_{\epsilon}(1),\\
		\|W_{\epsilon,\tilde{k}}\|_{NL}^{2\2}&\ge \|u_d\|_{NL}^{2\2}+\tilde{k}^{2\2}S_{HL}^{\frac{\2}{\2-1}}+o_{\epsilon}(1),\\
\|W_{\epsilon,\tilde{k}}\|^p_p&=\|u_d\|_p^p+o_{\epsilon}(1).		
		\end{align*}		
		We can choose $\tilde{k}$ large such that $\Phi'(t)<0$, for all $t>1$. Consequently,  $\tilde{J}_d(\overline{W}_t)\le \tilde{J}_d(W_{\epsilon,\tilde{k}}).$ Observe that $\Phi(t)\to-\infty$ as $t\to\infty$.
Since 	
	\begin{align*}
		\tilde{J}_d(W_{\epsilon,k})&=\frac{1}{2}\|W_{\epsilon,k}\|^2_{X_0}-\frac{1}{2\2}\|W_{\epsilon,k}\|_{N.L}^{2\2}-\frac{\alpha}{p}\|W_{\epsilon,k}\|_p^p\\
		&\le \frac{1}{2}\|W_{\epsilon,k}\|^2_{X_0}+\frac{|\alpha|}{p}\|W_{\epsilon,k}\|_p^p,
	\end{align*}
	so we can find a $k_0\in (0,\infty)$, such that
	\begin{equation}\label{b21}
	\tilde{J}_d(W_{\epsilon,k})\le \tilde{J}_d(u_d)+\frac{\2-1}{2\2}(S_{HL})^{\frac{\2}{\2-1}},
	\end{equation}
 for all $k\in(0,k_0]$.  Resuming all the above information, we get 
	\begin{equation*}
		\begin{aligned}
\tilde{J}_d(W_{\epsilon,k})&=\tilde{J}_d(w_{\epsilon,k})+\frac{\alpha}{p}(1-b^{p(\delta_p-1)})\|w_{\epsilon,k}\|_p^p\\
	=&\tilde{J}_d(u_d)+\frac{k^2}{2}\|v_{\epsilon}\|_{X_0}^2+k\langle u_d,v_{\epsilon}\rangle_{X_0}+\frac{1}{2\2}\left(\|u_d\|_{NL}^{2\2}-\|u_d+kv_{\epsilon}\|_{NL}^{2\2}\right)\\
		&+\frac{\alpha}{p}\left(\|u_d\|_p^p-\|u_d+kv_{\epsilon}\|_p^p \right)+\frac{\alpha}{p}(1-b^{p(\delta_p-1)})\|u_d+kv_{\epsilon}\|_p^p.
        \end{aligned}
        \end{equation*}
        With the help of Lemma \ref{B23} \textit{(v)}, one has 
        \begin{align}\label{b30}
   \notag \tilde{J}_d(W_{\epsilon,k})        
		\le &\tilde{J}_d(u_d)+\frac{k^2}{2}\|v_{\epsilon}\|_{X_0}^2+k\langle u_d,v_{\epsilon}\rangle_{X_0}-\frac{\alpha}{p}\left(\|kv_{\epsilon}\|_p^p+kp\int_{\Omega}u_d^{p-1}v_{\epsilon}+\hat{A}_1k^{p-1}\int_{\Omega}v_{\epsilon}^{p-1}u_d \right)\\
		& \notag -\frac{1}{2\2}\left(\|kv_{\epsilon}\|_{NL}^{2\2}+2\2k\int_{\Omega}\int_{\Omega}\frac{|u_{d}(x)|^{\2}|u_d(y)|^{\2-1}v_{\epsilon}(x)}{|x-y|^{\mu}}\dx\dy,\right.\\&\left.\qquad+\hat{A}_2k^{2\2-1}\int_{\Omega}\int_{\Omega}\frac{|v_{\epsilon}(x)|^{\2}|v_{\epsilon}(y)|^{\2-1}u_d(y)}{|x-y|^{\mu}}\dx\dy \right)+\frac{\alpha}{p}(1-b^{p(\delta_p-1)})\|u_d+kv_{\epsilon}\|_p^p.
	\end{align}
Since $u_d$ is a weak solution of \eqref{F}, we conclude
	$$ \langle u_d,v_{\epsilon}\rangle_{X_0}=\lambda_d\langle u_d,v_{\epsilon} \rangle+\int_{\Omega}\int_{\Omega}\frac{|u_d(x)|^{\2}|u_d(y)|^{\2-1}v_{\epsilon}}{|x-y|^{\mu}}\dx\dy+\alpha\int_{\Omega}|u_d|^{p-1}v_{\epsilon},$$
	using this in \eqref{b30}, we obtain
	\begin{align}\label{b23}
	\notag \tilde{J}_d(W_{\epsilon,k})&\le \tilde{J}_d(u_d)+\frac{k^2}{2}\|v_{\epsilon}\|_2^2+\lambda k\langle u_d,v_{\epsilon}\rangle-\frac{\alpha}{p}\left(\|kv_{\epsilon}\|_p^p+\hat{A}_1\int_{\Omega}v_{\epsilon}^{p-1}u_d \right)+\frac{\alpha}{p}(1-b^{p(\delta_p-1)})\|u_d+v_{\epsilon}\|_{p}^p\\&\quad  -\frac{1}{2\2}\|kv_{\epsilon}\|_{NL}^{2\2} -\frac{\hat{A}_2k^{2\2-1}}{2\2}\int_{\Omega}\int_{\Omega}\frac{|v_{\epsilon}(x)|^{\2}|v_{\epsilon}(y)|^{\2-1}u_d}{|x-y|^{\mu}}\dx\dy.	
	\end{align}
	\textbf{Case 1:} $\alpha=0$\par
    In this case, \eqref{b23} can be expressed as:
	\begin{align*}
	\tilde{J}_d(W_{\epsilon,k})&\le \tilde{J}_d(u_d)+\frac{k^2}{2}\|v_{\epsilon}\|_2^2+\lambda k\langle u_d,v_{\epsilon}\rangle-\frac{\hat{A}_2k^{2\2-1}}{2\2}\int_{\Omega}\int_{\Omega}\frac{|v_{\epsilon}(x)|^{\2}|v_{\epsilon}(y)|^{\2-1}u_d}{|x-y|^{\mu}}\dx\dy\\
    &\quad-\frac{1}{2\2}\|kv_{\epsilon}\|_{NL}^{2\2}\\
	&\le \tilde{J}_d(u_d)+\frac{\2-1}{2\2}(S_{HL})^{\frac{\2}{\2-1}}+k\lambda_d\hat{B}_1R^2\epsilon^{\frac{N-2s}{2}}-k^{2\2-1}\hat{B}_2\epsilon^{\frac{N-2s}{2}},
	\end{align*}
	 by using Lemma \ref{B22}, for some large $k_1$ we have $(k_1\lambda_d\hat{B}_1R^2\epsilon^{\frac{N-2s}{2}}-k_1^{2\2-1}\hat{B}_2\epsilon^{\frac{N-2s}{2}})<0$, and we obtain \eqref{b21} for $k\in [k_1,\infty).$ When $k\in(k_0,k_1)$, we can choose $R$ small enough so that \eqref{b21} holds.\par
	\textbf{Case 2:} $\alpha<0$\par
	Since $\delta_p<1$ and $b>1$, it follows that $(1-b^{p(\delta_p-1)})>0$. Therefore, for $\alpha<0$ and $2s<N<6s$, we find that
	  \begin{align*}
	  \tilde{J}_d(W_{\epsilon,k})&\le \tilde{J}_d(u_d)+k^2\|v_{\epsilon}\|_{X_0}^2+\lambda_dk\langle u_d,v_{\epsilon}\rangle-\frac{1}{2\2}\|kv_{\epsilon}\|_{NL}^{2\2}+\frac{|\alpha|}{p}(\|kv_{\epsilon}\|_p^p+\hat{A_1}k^{p-1}\int_{\Omega}v_{\epsilon}^{p-1}u_d)\\
	  &\quad-\frac{\hat{A}k^{2\2-1}}{2\2}\int_{\Omega}\int_{\Omega}\frac{|v_{\epsilon}(x)|^{\2}|v_{\epsilon}(y)|^{\2-1}u_d}{|x-y|^{\mu}}\dx\dy\\
	  &<\tilde{J}_d(u_d)+\frac{\2-1}{2\2}(S_{HL})^{\frac{\2}{\2-1}}+k\lambda_d\hat{B}_3R^2\epsilon^{\frac{N-2s}{2}}-k^{2\2-1}\hat{B}_4\epsilon^{\frac{N-2s}{2}},
	  \end{align*}
      following the same reasoning as for $\alpha=0$, we can choose $k_2$ large enough so that \eqref{b21} holds and for $k\in (k_0,k_1)$, we can also find $R$ small enough, so that \eqref{b21} holds.\par
	  \textbf{Case 3:} $\alpha>0$ \par
	 Since, $b^2=\frac{\|u_d+kv_{\epsilon}\|^{\frac{2}{s}}_2}{d}$ therefore, we have
	  $$1-b^{p(\delta_p-1)}<p(1-\delta_p)\left(\frac{k}{d}\langle u_d,v_{\epsilon}\rangle+\frac{k^2}{2d}\|v_{\epsilon}\|_2^2\right).$$
	 In implies that for   for $\alpha>0$ and $N\in (2s,6s)$ we conclude that
	  \begin{align*}
	  \tilde{J}_d(W_{\epsilon,k})&\le \tilde{J}_d(u_d)+k^2\|v_{\epsilon}\|_{X_0}^2+\lambda_dk\langle u_d,v_{\epsilon}\rangle-\frac{k^{2\2}}{2\2}\|v_{\epsilon}\|_{NL}^{2\2}+\frac{\alpha}{p}(1-b^{p(\delta_p-1)})\|w_{\epsilon,k}|_p^p\\&\quad-\frac{\hat{A}k^{2\2-1}}{2\2}\int_{\Omega}\int_{\Omega}\frac{|v_{\epsilon}(x)|^{\2}|v_{\epsilon}(y)|^{\2-1}u_d}{|x-y|^{\mu}}\dx\dy\\
	  &\le \tilde{J}_d(u_d)+k^2\|v_{\epsilon}\|_{X_0}^2+\lambda_dk\langle u_d,v_{\epsilon}\rangle+{\alpha(1-\delta_p)}\left(\frac{k}{d}\langle u_d,v_{\epsilon}\rangle+\frac{k^2}{2d}\|v_{\epsilon}\|_2^2\right)\|w_{\epsilon,k}\|_p^p\\&\quad-\frac{k^{2\2}}{2\2}\|v_{\epsilon}\|_{NL}^{2\2}-\frac{\hat{A}k^{2\2-1}}{2\2}\int_{\Omega}\int_{\Omega}\frac{|v_{\epsilon}(x)|^{\2}|v_{\epsilon}(y)|^{\2-1}u_d}{|x-y|^{\mu}}\dx\dy\\
	  &<\tilde{J}_d(u_d)+\frac{\2-1}{2\2}(S_{HL})^{\frac{\2}{\2-1}}+k\lambda_d\hat{B}_5R^2\epsilon^{\frac{N-2s}{2}}-k^{2\2-1}\hat{B}_5\epsilon^{\frac{N-2s}{2}}. 
	  \end{align*}
	Now following the assertions as in the above two cases, we can easily find some large $k_3$ and choose $R$ too small that again \eqref{b21} holds for $k\in (k_0,k_3)$.\par 
	  With this, we are in a situation to construct a suitable path on $S_d^+$.  Define $\gamma(\ell):=W_{\epsilon,2\ell\tilde{k}}$ for $\ell\in [0,1/2]$ and $\gamma(\ell):=\overline{W}_{2(\ell-1/2)t_0+1}$ for $\ell\in[1/2,1]$, where $t_0$ is large enough such that $\tilde{J}_d(\overline{W}_{t_0+1})<\tilde{J}_d(u_d)$ and we have $\overline{W}_{t_0+1}\not\in\mathcal{T}$. Therefore $\gamma\in \Gamma$ and $\displaystyle\sup_{l\in[0,1]} \tilde{J}_d(\gamma)<\tilde{J}_d(u_d)+\frac{\2-1}{2\2}(S_{HL})^{\frac{\2}{\2-1}}$, implies that 
	  $$\beta(d)<\tilde{J}_d(u_d)+\frac{\2-1}{2\2}(S_{HL})^{\frac{\2}{\2-1}}=m_d+\frac{\2-1}{2\2}(S_{HL})^{\frac{\2}{\2-1}},$$
	  which completes the proof.\QED 
	\end{proof}

\textbf{Proof of theorem 1.2} From Proposition \ref{B3},  we get a bounded sequence $\{u_n\}\subset S_d^+$, such that $\lim\limits_{n\to\infty}\tilde{J}_d(u_n)=\beta(d)$ and $(\tilde{J}_d|_{S_d^+})'(u_n)\to 0$ as $n\to\infty.$  We can assume that up to a subsequence, 
\begin{align*}
u_n&\rightharpoonup\hat{u}_d~~~ \text{in}~X_0,\\
(|x|^{-\mu}*|u_n|^{\2})|u_n|^{\2}&\rightharpoonup (|x|^{-\mu}*|\hat{u}_d|^{\2})|\hat{u}_d|^{\2}~~\text{weakly}
~\text{in}~ L^{\frac{2N}{N+2s}}(\Om),\\
u_n&\to \hat{u}_d~~~\text{in}~L^r(\Omega)~\text{for}~2\le r<2^*.\\
u_n&\to \hat{u}_d~~~\text{a.e. in }~\Omega.
\end{align*}	
        Assume $\omega_n=u_n-\hat{u}_d$ then using the fact that 
		$(\tilde{J}_d|_{S_d^+})'(u_n)\to 0$,  there exists $\lambda_n$ such that $\tilde{J}_d'(u_n)-\lambda_nu_n^+\to 0.$ Let $\hat{\lambda}_d$ be the Lagrange multiplier corresponding to $\hat{u}_d$. Using the same arguments as in the proof of Theorem \ref{B11}, we have
		\begin{equation*}
		\|\omega_n\|^2_{X_0}=\|\omega_n\|_{NL}^{2\2}+o_n(1),
		\end{equation*}
	Assume that $\|\omega_n\|^2_{X_0}\to l\ge 0$, then using the definition of $S_{H,L}$ we obtain that	
	\begin{equation*}
	\|\omega_n\|_{X_0}^2\ge S_{HL}\|\omega_n\|_{NL}^2,
	\end{equation*}
	which implies that $l\ge S_{HL}l^{\frac{2}{\2}}$.	We claim that 
		 that $l=0$. Suppose if possible $l>0$, then $l\ge S_{HL}^{\frac{\2}{\2-1}}$, which implies
		$$\tilde{J}_{d}(w_n)\ge \left(\frac{\2-1}{\2} \right)S_{HL}^{\frac{\2}{\2-1}}+o_n(1).$$
		By using the Brezis-Lieb Lemma, we have
		\begin{align}\label{b15}
		\notag \tilde{J}_{d}(u_n)&= \tilde{J}_{d}(u_d)+ \tilde{J}_{d}(w_n)+o_n(1)\\
		&\ge m_{d}+\left(\frac{\2-1}{\2} \right)S_{HL}^{\frac{\2}{\2-1}} +o_n(1).
		\end{align}
		Here we use the fact that $m_d$ is the energy level corresponding to the solution $u_d$.  On the other hand, by Proposition \ref{B9}, we have
		\begin{equation}\label{b16}
		\tilde{J}_d(u_n)+o_n(1)=\beta(d)<m_d+\left(\frac{\2-1}{\2} \right)S_{HL}^{\frac{\2}{\2-1}}+o_n(1),
		\end{equation}
	Utilizing 	 \eqref{b15} and \eqref{b16}, we  get $l=0$.  That is,  $u_n\to \hat{u}_{d}$ strongly in $X_0.$ Now using same method  as in    Proposition \ref{B7}, we get the desired result. \qed

\noindent{\bf Acknowledgements :} 
The first author would like to thank the Science and Engineering Research Board, Department of Science and Technology, Government of India, for the financial support under the grant  SRG/2022/001946. 
\bibliographystyle{plain}
\bibliography{references}

\end{document}